\documentclass[journal]{IEEEtran}
\usepackage{amsmath}
\usepackage{amssymb}
\usepackage{amsthm}
\usepackage{arydshln}
\usepackage{nicematrix}
\usepackage{array}
\usepackage{cite}
\usepackage{nameref,hyperref}
\usepackage{caption}
\usepackage{subcaption}
\usepackage[shortlabels]{enumitem}
\usepackage[utf8]{inputenc}
\usepackage{color,soul}
\usepackage[scr]{rsfso}
\usepackage{nicematrix}
\usepackage{arydshln}
\usepackage{comment}
\usepackage[safe]{tipa}
\newcommand{\Laplace}{\mathscr{L}}
\usepackage{xcolor}

\usepackage{nccmath}
\usepackage[strict]{changepage}

\usepackage{lipsum}

\newtheorem{definition}{Definition}
\newtheorem{theorem}{Theorem}

\theoremstyle{remark}
\newtheorem{assumption}{Assumption}
\newtheorem{remark}{Remark}
\newtheorem{problem}{Problem}
\newtheorem{corollary}{Corollary}
\newtheorem{example}{Example}

\DeclareMathOperator*{\rank}{rank}

\DeclareMathOperator*{\im}{Im}
\DeclareMathOperator*{\proj}{proj}
\DeclareMathOperator*{\res}{res}

\DeclareMathOperator*{\minimize}{minimize}
\DeclareMathOperator*{\subj2}{subject\ to}

\expandafter\def\expandafter\normalsize\expandafter{%
    \normalsize%
    \setlength\abovedisplayskip{4pt}%
    \setlength\belowdisplayskip{4pt}%
    \setlength\abovedisplayshortskip{0pt}%
    \setlength\belowdisplayshortskip{0pt}%
}

\allowdisplaybreaks

\usepackage{array}
\usepackage{hyperref}   
\hypersetup{
	colorlinks=true,
	linkcolor=blue,
	citecolor=blue,
	urlcolor=blue,
}

\hyphenation{op-tical net-works semi-conduc-tor}

\begin{document}

\title{Algebraic Control: Complete Stable Inversion with Necessary and Sufficient Conditions}

\author{Burak~K\"{u}rk\c{c}\"{u}, \textit{Member}, IEEE, Masayoshi Tomizuka, \textit{Life Fellow}, IEEE%
\thanks{Burak~K\"{u}rk\c{c}\"{u} is with the Department of Electrical and Computer Engineering,
        Santa Clara University, Santa Clara, CA 95053, USA
        (e-mail: bkurkcu@scu.edu)}%
\thanks{ Masayoshi Tomizuka is with the Department of
Mechanical Engineering, University of California, Berkeley,
CA 94720, USA (e-mail: tomizuka@berkeley.edu).}

}

\markboth{}%
{Shell \MakeLowercase{\textit{et al.}}: Bare Demo of IEEEtran.cls for IEEE Journals}

\maketitle

\begin{abstract}
In this paper, we establish necessary and sufficient conditions for stable inversion, addressing challenges in non-minimum phase, non-square, and singular systems. An $\mathcal{H}_\infty$-based algebraic approximation is introduced for near-perfect tracking without preview. Additionally, we propose a novel robust control strategy combining the nominal model with dual feedforward control to form a feedback structure. Numerical comparison demonstrates the approach's effectiveness.
\end{abstract}

\begin{IEEEkeywords}
Stable inversion, non-minimum phase systems, robustness, algebraic control, multivariable systems
\end{IEEEkeywords}

\IEEEpeerreviewmaketitle

\section{Introduction} Recent advancements in autonomous systems and robotics have increased interest in stable inversion to meet performance demands. Learning-based control methods have further broadened research into inversion \cite{de2022frequency}, emphasizing the need to re-examine the conditions for stable inversion.

The study of stable inversion began with Brockett's introduction of \textit{functional reproducibility} in 1965 \cite{brockett1965reproducibility}. Silverman extended these concepts to multivariable systems in 1969 \cite{silverman1969inversion}, followed by geometric formulations introduced by Basile and Marro \cite{basile1973new} and applications in reduced-order control by Moylan \cite{moylan1977stable}. The stable inversion in discrete-time systems was first addressed by Tomizuka \cite{tomizuka1987zero}, while Hunt explored non-causal inversion techniques \cite{hunt1996noncausal}. 

Recent advancements include stable inversion for SISO affine systems \cite{spiegel2024stable}, square systems in continuous time \cite{saeki2023model} and discrete time \cite{naderi2018inversion}, and approximate inversion with preview\cite{romagnoli2019general}. Additionally, {strong inversion} for handling initial states in multivariable systems \cite{Loreto2023} and geometric methods for convolution-based inversion \cite{marro2002convolution} have further expanded the field.

As the field evolves, the need for system classification and corresponding solutions becomes evident. The primary objective of inversion-based approaches is to determine a bounded input that accurately reproduces the desired or given output. However, this goal poses challenges, particularly with non-minimum phase systems, which complicate the establishment of stable inputs under causality without infinite pre-actuation. Furthermore, stable inversion can impact tracking performance and stability in the presence of uncertainties \cite{devasia2002should}, where merging learning-based control with stable inversion partially addresses these issues \cite{spiegel2024stable,kim2024asymptotic}.

Despite the theoretical challenges, the practical side is equally important. Stable inversion methods have been applied across various domains, including iterative learning control \cite{van2018inversion}, trajectory tracking for autonomous systems \cite{drucker2023trajectory}, and optimal channel equalization \cite{gu2003worst}.

In this paper, we extend the concept of \textit{Zero Phase Error Tracking (ZPET)} \cite{tomizuka1987zero} to continuous-time multivariable systems. In continuous time, we may apply ZPET for compensating the phase shifts introduced by unstable zeros, but ZPET compensation alone does not make
sense due to the  high-pass nature of these zeros. However, if the overall compensator is designed to suppress these high-pass effects, ZPET remains effective at low frequencies. This insight motivates three contributions:

\noindent 1) We revisit the stable inversion problem using a similar algebraic setting of Model Matching \cite{wolovich2012linear}, yet specifying output structures leading to necessary and sufficient conditions, even for non-minimum phase, non-square, and singular systems.

\noindent 2) We propose an algebraic approximation that repurposes certain functions in $\mathcal{H}_\infty$ theory to achieve near-perfect tracking when perfect tracking conditions are not met, eliminating the need for preview or pre-actuation.

\noindent 3) We guarantee robustness without learning-based mechanisms. When limitations or uncertainties make the output set only partially reachable, we prove that the tracking error converges to an inevitable yet bounded residual.

Our goal is to develop a unified, \textit{causal} framework for stable inversion in non-minimum phase, multivariable, and uncertain systems, addressing the inherent challenges.

We organize this paper as follows: Section~\ref{sec:Prel} provides the preliminaries. Section~\ref{sec:Main} presents the main results. Section~\ref{sec:examples} illustrates numerical examples. Section~\ref{sec:conc} concludes the paper.
\section{Preliminaries} \label{sec:Prel}
\subsection{Algebraic Preliminaries} \label{sec:algebraic background}
Let \( \mathbb{R} \) be a \textit{field} of real numbers. Consider the set of all polynomials in \( s \) with coefficients in \( \mathbb{R} \). This set forms a commutative ring over \( \mathbb{R} \) and is denoted by \( \mathbb{R}[s] \). If this ring, say \( \mathcal{R} \), has an identity element and no zero divisors, then \( \mathcal{R} \) is an \textit{integral domain}. Note that \( \mathbb{R}[s] \) is also a Euclidean Domain. 
Moreover, the set of all such rational functions in \( s \) over \( \mathbb{R} \) forms a \textit{field}, denoted by \( \mathbb{R}(s) \) \cite{forney1975minimal}, and \( \mathbb{R}[s] \subset \mathbb{R}(s) \) holds. The sets of \( n_y \times n_u \) matrices with elements in \( \mathbb{R} \), \( \mathbb{R}[s] \), and \( \mathbb{R}(s) \) are denoted by \( \mathbb{R}^{n_y \times n_u} \), \( \mathbb{R}[s]^{n_y \times n_u} \), and \( \mathbb{R}(s)^{n_y \times n_u} \) respectively.  Any $n_y \times n_u$ matrix, say $ \grave{P}(s) \in \mathbb{R}[s]^{n_y \times n_u}$, over $\mathcal{R}$ 
 can be factorized (see \textit{Invariant Factor Theorem} \cite[Theorem 2.1]{kuijper2012first}) as
 \begin{align} \label{eq:invariantfactor}
 \hspace{-0.2cm}
   \grave{P}(s)&=U_R(s)S_m(s)V_R(s) \nonumber \\   
   &=[{U}_{R1} \ {U}_{R2}]  \begin{bmatrix}
    \Lambda(s) & \textbf{0}_{r \times (n_u-r)}\\
    \textbf{0}_{(n_y-r)\times r} & \textbf{0}_{(n_y-r)\times (n_u-r)}
\end{bmatrix}{\begin{bmatrix}
    {V}_{R1} \\
    {V}_{R2}
\end{bmatrix}}
 \end{align}
where $U_R(s) \in \mathbb{R}[s]^{n_y \times n_y}$ and $V_R(s) \in \mathbb{R}[s]^{n_u \times n_u}$ are $\mathcal{R}$-Unimodular matrices, $\Lambda(s)=\text{diag}[d_1(s) \ \dots \ d_r(s)]$ with unique \textit{monic} $d_i(s) \in \mathcal{R}$ such that $d_i$ divides $d_{i+1}$. The matrix $S_m(s) \in \mathbb{R}[s]^{n_y \times n_u}$ is called the \textit{Smith Normal} form of $\grave{P}$.

Let $V$ be a vector space over the field $\mathbb{R}(s)$ with dimension $k$, consisting of $n$-tuples such that a basis of vector polynomials can always be found for \( V \) [page 22, \cite{amparan2021minimal}]. A \textit{minimal} basis of $V$ is defined as a $k \times n$ polynomial matrix $P_m$ \cite{forney1975minimal}. \textit{Adapted Forney's Theorem} \cite[Section 3(4.)]{forney1975minimal} states that if $y = xP_m$ is a polynomial $n$-tuple, then $x$ must be a polynomial $k$-tuple.

For the \textit{rank} notation, akin to \cite{antoniou2005numerical}, the rank of $P$ is defined as the maximum size of any linearly independent subset of its columns in the \textit{field} \(\mathbb{R}(s)\), denoted by \(\text{rank}_{\mathbb{R}(s)}(P)\), where \(\text{rank}_{\mathbb{R}(s)}(P) \ne \text{rank}_{\mathbb{R}}(P) \). 

Suppose $\text{rank}_{\mathbb{R}(s)}(P)=r$, where $1 \leq r \leq \min(n_u,n_y)$. {Define $J=\{j_1,\dots,j_r\} \subseteq \{1,\dots,n_u\}$ as an ordered indexed set corresponding to $P(s)$'s linearly independent columns. We then define the matrix $L$ as 
\begin{equation} \label{eq:Lind_columns_P}
{L} = [ p_{j_i} \ \dots \ p_{j_r} ]\in \mathbb{R}(s)^{n_y \times r} \, 
\end{equation}
where $p_{j_k}$ denotes the $j_k$-th column of $P$. By construction, $L$ has full column rank $r$, forming a basis for the column space of $P$ while preserving its span with a minimal set of linearly independent columns.} The image of $P$ over $\mathbb{R}(s)$ is then:
\small
\begin{align}\label{eq:spanL(P)}
\text{Im}_{\mathbb{R}(s)}(P) = \left\{ \sum_{i=1}^{r} c_i p_i : c_i \in \mathbb{R}(s), p_i \in {L} \right\} \subseteq \mathbb{R}(s)^{n_y}.
\end{align}
\normalsize
Some further notations throughout the paper are: $\Vert (\cdot)(t)\Vert$ denotes the  Euclidean norm, $\Vert (\cdot) (t) \Vert_\infty\triangleq \text{ess sup}_{t \ge 0} \Vert (\cdot)(t)\Vert$, any complex number can be expressed as $\Re(\cdot)\pm j\Im(\cdot) \in \mathbb{C}$, $\sigma$ represents singular values. Time domain square-integrable functions are denoted by $\mathcal{L}_2(-\infty, \infty)$. Its causal subset is given by $\mathcal{L}_2[0, \infty)$. For the frequency domain, including at $\infty$, $\mathcal{L}_2(j\mathbb{R})$ represents square-integrable functions on $j\mathbb{R}$, and $\mathcal{L}_\infty(j\mathbb{R})$ denotes bounded functions on $\Re(s) = 0$. All these functions in Lebesgue spaces may be either matrix-valued or scalar.
Then, $\mathcal{R}\mathcal{H}_\infty$ denotes the set of real rational $\mathcal{L}_\infty(j\mathbb{R})$ functions analytic in $\Re(s) > 0$, $(\cdot)^\dagger$ represents pseudo-inverse yielding identity matrix under multiplication, $(\cdot)^*$ denotes complex conjugates transpose, and $\langle . , . \rangle$ denotes the inner product. Among various definitions available for multivariable zeros of a Transfer Function Matrix (TFM), $P(s)$, we adopt the definition given in \cite[chapter 4.5.3]{skogestad2005multivariable} as
\begin{align} \label{eq:zerosP}
\text{Z}_P &\triangleq \{z \in \mathbb{C} : P(z)u_z=0y_z\} \\
\text{Z}_R &\triangleq  \{z \in \text{Z}_P : \Re(z) > 0\} \subseteq \text{Z}_P  \label{eq:rhpzeros}
\end{align}
where $\text{Z}_R$ defines the RHP zeros making the system non-minimum phase, $u_z,y_z$ (can be obtained via singular value decomposition (SVD) of $P(z)=U\Sigma V^*$) are normalized input and output zero directions respectively.
\subsection{Problem Statement} \label{sec:problem}
Consider the general representation of a MIMO linear time-invariant (LTI) system, $\Xi: \mathbb{U} \times \mathbb{X} \to \mathbb{Y}$, as
\begin{align} \label{eq:ss0}
\Xi(u,x_0) : \begin{cases}
\dot{x}(t)&= Ax(t)+Bu(t)\\ 
 y(t)&= C x(t), \ x_0\triangleq x(0) \ne 0
\end{cases}
\end{align} 
where $A \in \mathbb{R}^{n\times n}$, $B\in \mathbb{R}^{n\times n_u}$,  $C\in \mathbb{R}^{n_y \times n}$, $x, x_0 \in \mathbb{X} \subseteq \mathbb{R}^{n}, x(t) \in \mathcal{L}_2[0,\infty),  y \in \mathbb{Y} \subset  \mathbb{R}^{n_y}, y(t) \in \mathcal{L}_2[0,\infty), u \in \mathbb{U} \subset \mathbb{R}^{n_u}$, and $u(t) \in \mathcal{L}_2[0,\infty)$. Moreover, the TFM representation of $\Xi(u,0) \triangleq C(sI-A)^{-1}B =P(s)$ and since all real rational strictly proper transfer matrices with no poles on the
imaginary axis form $\mathcal{RL}_2(j\mathbb{R}) \subset \mathcal{L}_2(j\mathbb{R})$ \cite[page 48]{zhou1998essentials}, $ P(s) $ belongs to $ \mathcal{L}_2(j\mathbb{R})\cap \mathbb{R}(s)^{n_y \times n_u}$. The system, $\Xi$, is: P1) a \textit{non-minimum phase} s.t. $Z_R \ne \emptyset$, P2) either $n_y=n_u$ (square) or $n_y \ne n_u$ (nonsquare), and causal ($t > 0 $).
\begin{assumption}\label{ass:minimalABC}
The system given in (\ref{eq:ss0}) is minimal.
\end{assumption}

\begin{problem} \label{pr:problem definition}
{Under Assumption \ref{ass:minimalABC}, let \(\Xi\) denote the forward system given in \eqref{eq:ss0}, and let \(y_o(t)\) be an observed output trajectory. We define the \textit{right inverse} $\Xi_i^\Gamma: \mathbb{Y}\times\mathbb{X}\to\mathbb{U}$, which, given $y_o(t)$ and a prescribed inverse system's initial states, $\bar{x}_0 \in \mathbb{X}$, produces the control input
\begin{align} \label{eq:uinvpr}
u_{inv}(t) = \Xi_{i}^\Gamma(y_o(t),\bar{x}_0), \quad \Vert u_{inv}(t) \Vert_\infty < \infty.
\end{align}
Let $x_0$ be the initial states of the forward system. The error is then defined by
\begin{align} \label{pr:error}
    e(t) &\triangleq  y_o(t)-\Xi(u_{inv}(t),x_0) 
\end{align}
The inverse, $\Xi_i^\Gamma$,} is classified based on $e(t)$ as follows:
\begin{itemize}
   \item[a.]  {Stable exact inverse}: if \( e(t) = 0 \) for all \( t > 0 \).
   \item[b.]  {Stable approximate inverse}: if \( \Vert e(t) \Vert_\infty < \infty \).
\end{itemize}
Here, our goal is to construct a stabilizing inverse operator \( \Xi_i^\Gamma \)such that the error satisfies
\begin{align}\label{ESwithinevitabeerrors}
   \| e(t)\| \leq \alpha \|e(0)\| e^{-\beta t} + \varphi, \quad \forall t > 0,
\end{align}
for some $\alpha, \beta > 0$, and $\varphi \geq 0$.
\end{problem}
\begin{remark} \label{rem:similarproblems}
The terminology here (cf.\ \cite{estrada2007left}) also lets $y_o(t)$ denote a desired trajectory \cite{naderi2018inversion}.  With Assumption~\ref{ass:minimalABC} and $\|e\|_\infty<\infty$, \eqref{pr:error} is Lyapunov-stable. If exact inversion ($\alpha=\varphi=0$) is unattainable, we can employ a robust stabilizing approximate inverse—one viable choice among the many approximations acknowledged in~\cite{romagnoli2019general}—with an irreducible error~$\varphi$.
\end{remark}
\begin{remark}\label{rem:ic}  
As is typical in \textit{exact} stable inversion, we initially assume that \((A, B, C)\) in \eqref{eq:ss0} and the initial states \(x_0\) and  \(\bar{x}_0\) are known. Section~\ref{sec:almostiff} relaxes this assumption by dropping the requirement on \(\bar x_0\).
Section~\ref{sec:practical_modif} goes further by allowing \(x_0\) to be unknown and by also incorporating model uncertainty, resulting in an approximate solution. For further details on initial states, see \cite{Loreto2023}.
\end{remark}

To solve Problem~\ref{pr:problem definition}.a algebraically, define 
\(y_{ic}(t) \triangleq \Xi(0, x_0)\), {$y \triangleq y_o-y_{ic}$, 
\(u_{inv}^{(0)}(t) \triangleq \Xi_i^\Gamma(0, \bar{x}_0)\), 
and \(u(t) \triangleq \Xi_i^\Gamma(y(t), 0)\) 
to represent the trajectories and control inputs for known or zero initial states. 
Then, Remark~\ref{rem:ic} lets us redefine \eqref{eq:uinvpr} as
\(u_{inv}(t) \triangleq \Xi_{i}^\Gamma(y_o(t),\bar{x}_0) - u_{inv}^{(0)}(t)\).} Applying the Laplace transform (see Paley--Wiener Th.\ \cite[p.~104]{hoffman2007banach} for existence) yields

\begin{align}
&\mathscr{L} \{ y_o(t) - {y}_{ic}(t)\} = {C}(s{I}-{A})^{-1}{B}U(s) =Y(s) \label{eq:TFM}\\
& \quad \quad \Rightarrow \ P(s)U(s)=Y(s) \quad s.t. \label{eq:algebraic equation} \\   
&\begin{bmatrix} 
P_{11}(s) & \cdots & P_{1n_u}(s) \\
\vdots & \ddots & \vdots \\
P_{n_y1}(s) & \cdots & P_{n_yn_u}(s)
\end{bmatrix}
\begin{bmatrix}
U_1(s) \\
\vdots \\
U_{n_u}(s)
\end{bmatrix}
=
\begin{bmatrix}
Y_1(s) \\
\vdots \\
Y_{n_y}(s)
\end{bmatrix}. \label{eq:algebraic equation2}
\end{align}
From this point on, similar to \cite[Theorem 8.5.2]{wolovich2012linear}, Problem \ref{pr:problem definition}.a reduces to algebraically solving the rational matrix equation where $\Vert u(t) \Vert_\infty < \infty \iff \Vert u_{inv}(t) \Vert_\infty < \infty$.
\section{Main Results} \label{sec:Main}
\subsection{{Right Inverse}} \label{sec:leftinverse}
Under Assumption \ref{ass:minimalABC}, \eqref{eq:TFM} shows that all rational function entries of $P(s)$ and $Y(s)$ in \eqref{eq:algebraic equation} share the same greatest common denominator, $\Delta=\det (sI-A) \in \mathbb{R}[s]$. To cancel out $1/\Delta$, we multiply \eqref{eq:algebraic equation} by $\Delta$, yielding 
\begin{align} \label{eq:algebraiceq_noGCD}
    \grave{P}(s)U(s)= \grave{Y}(s)
\end{align}
where $U(s) \in \mathbb{R}(s)^{n_u}$ and $\grave{Y}(s) \in \mathbb{R}(s)^{n_y}$. Then, using the \textit{invariant factor theorem} given by \eqref{eq:invariantfactor} on $\grave{P}(s)$ in \eqref{eq:algebraiceq_noGCD}, we have
\begin{align} \label{eq:leftinverse}
U(s)&=[\hat{V}_{R1} \ \hat{V}_{R2}] \begin{bmatrix}
    \Lambda^{-1}(s) & \textbf{0}_{r \times (n_y-r)}\\
    \textbf{0}_{(n_u-r)\times r} & \textbf{0}_{(n_u-r)\times (n_y-r)}
\end{bmatrix}\begin{bmatrix}
    \hat{U}_{R1} \\
    \hat{U}_{R2}
\end{bmatrix}\grave{Y}(s)\nonumber\\& \qquad +(I_{n_u}-\hat{V}_{R1}{V}_{R1})\kappa \nonumber \\
&=\Biggr( [\hat{V}_{R1} \ \hat{V}_{R2}]  \begin{bmatrix}
    \Lambda^{-1}(s) & \textbf{0}_{r \times (n_y-r)}\\
    \textbf{0}_{(n_u-r)\times r} & \textbf{0}_{(n_u-r)\times (n_y-r)}
\end{bmatrix}\begin{bmatrix}
    \hat{U}_{R1} \\
    \hat{U}_{R2}
\end{bmatrix} \nonumber\\ & \qquad +(I_{n_u}-\hat{V}_{R1}{V}_{R1})\bar{\kappa}  \Biggr)\grave{Y}(s)= \Xi_i^{\Gamma}(s) \grave{Y}(s) 
\end{align}
where $\Lambda^{-1}(s)=\text{diag}[{1}/{d_1(s)} \ \dots {1}/{d_r(s)}] \in \mathbb{R}(s)^{r \times r}$,  $\hat{U}_R(s) \in \mathbb{R}[s]^{n_y \times n_y}$ and $\hat{V}_R(s) \in \mathbb{R}[s]^{n_u \times n_u}$ are $\mathcal{R}$-Unimodular matrices such that  ${U}_R(s)\hat{U}_R(s)=I_{n_y}$ and ${V}_R(s)\hat{V}_R(s)=I_{n_u}$ respectively, and $\kappa, \bar{\kappa} \in \mathcal{R}\mathcal{H}_\infty \  (\bar{\kappa}\triangleq \kappa(\grave{Y}^T(s)\grave{Y}(s))^{-1}\grave{Y}^T(s))$ are any arbitrary vectors. 
{\begin{remark}
From an algebraic standpoint, although obtaining $U(s)$ involves multiplying $\grave{P}(s)$ by its left inverse, following \cite{moylan1977stable,estrada2007left}, we denote $\Xi_i^\Gamma$ as a “\textit{right inverse}”.
\end{remark}}
\subsection{Algebraic Necessary and Sufficient Conditions} \label{sec:algebraic_iff}
In this subsection, to find the solution(s) for Problem \ref{pr:problem definition}.a, we will show the solvability of (\ref{eq:algebraic equation}) algebraically and therefore the existence and boundedness conditions of \eqref{eq:leftinverse}. 

\begin{theorem} \label{th:algebraic_iff}
Consider $\Xi$ in \eqref{eq:ss0} with P1-P2 and $\Xi^\Gamma_i$ in \eqref{eq:leftinverse}. Then necessary and sufficient conditions for Problem \ref{pr:problem definition}.a. are
\begin{align*}
    \Bigl( e(t)=0, \ \Vert u(t) \Vert_\infty < \infty \Bigl) \iff & \Bigl( Y(s) \in \im_{\mathbb{R}(s)}(P) \Bigl) \text{ and } \\
     & \Bigl( Y(z) = 0, \ \forall z \in Z_R \Bigl).
\end{align*}
\end{theorem}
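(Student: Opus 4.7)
The plan is to reduce the rational matrix equation $P(s)U(s)=Y(s)$ to decoupled scalar equations via the invariant factor decomposition \eqref{eq:invariantfactor} of $\grave{P}(s)=\Delta(s)P(s)$, after which both directions follow from elementary divisibility. I multiply \eqref{eq:algebraic equation} by $\Delta$ to obtain $\grave{P}(s)U(s)=\grave{Y}(s)$, then set $W:=V_R U$ and $\tilde{Y}:=\hat U_R\grave{Y}$. Because $U_R,V_R,\hat U_R,\hat V_R$ are $\mathcal{R}$-unimodular (polynomial with polynomial inverse, hence analytic and invertible on all of $\mathbb{C}$), the equivalences $U\in\mathcal{H}_\infty^{n_u}\iff W\in\mathcal{H}_\infty^{n_u}$ and $\tilde{Y}(z)=0\iff\grave{Y}(z)=0$ come for free. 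The system decouples into $\Lambda(s)W_1(s)=\tilde{Y}_1(s)$ on the top $r$ components and $\tilde{Y}_2(s)\equiv 0$ on the bottom $n_y-r$ components.

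For sufficiency, I assume $Y\in\im_{\mathbb{R}(s)}(P)$ and $Y(z)=0$ for every $z\in Z_R$. The first is equivalent to $\tilde{Y}_2\equiv 0$, which ensures consistency of the reduced system. Minimality (Assumption~\ref{ass:minimalABC}) excludes pole-zero cancellation, so $\Delta(z)\neq 0$ on $Z_R$; therefore $Y(z)=0$ lifts to $\grave{Y}(z)=0$ and hence to $\tilde{Y}_1(z)=0$. Choosing $W_{1,i}:=\tilde{Y}_{1,i}/d_i$ and $W_2:=0$, each scalar quotient is $\mathcal{H}_\infty$ because every RHP root of $d_i$ lies in $Z_R$ (via $d_i\mid d_r$ and the identification of the RHP roots of $d_r$ with $Z_R$ under minimality) and is cancelled by a numerator zero. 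Back-substitution yields $U=\hat V_R W\in\mathcal{H}_\infty^{n_u}$; the Paley--Wiener theorem gives $\|u\|_\infty<\infty$, and $P(s)U(s)=Y(s)$ enforces $e(t)=0$.

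For necessity, $e(t)=0$ turns \eqref{eq:TFM} into the rational identity $P(s)U(s)=Y(s)$, which is precisely $Y\in\im_{\mathbb{R}(s)}(P)$ per \eqref{eq:spanL(P)}. The bound $\|u\|_\infty<\infty$ forces $U\in\mathcal{H}_\infty^{n_u}$, and hence $W=V_R U\in\mathcal{H}_\infty^{n_u}$. Reading off $\tilde{Y}_{1,i}(s)=d_i(s)W_{1,i}(s)$ in the closed RHP, each $\tilde{Y}_{1,i}$ inherits every RHP root of its $d_i$; combined with $\tilde{Y}_2\equiv 0$ and the invertibility of the unimodular $\hat U_R(z)$ at every $z$, this lifts to $\grave{Y}(z)=0$ and, via $\Delta(z)\neq 0$, to $Y(z)=0$ for all $z\in Z_R$.

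The main technical obstacle is the direction-aware identification of the RHP roots of the invariant polynomials $d_i$ with the SVD-based zero set $Z_R$ in \eqref{eq:zerosP}: multiplication by $\Delta$ could in principle enlarge the Smith zero set at the poles of $P$, but Assumption~\ref{ass:minimalABC} forbids such coincidence and confines the RHP roots of $\Lambda$ exactly to $Z_R$; the componentwise vanishing $\tilde{Y}_{1,i}(z)=0$ must then be translated into the full vectorial $Y(z)=0$ through the coupling induced by $\hat U_R(z)$ along the left zero direction $y_z$. A secondary care is the polynomial factor $(I-\hat V_{R1}V_{R1})$ parametrizing the homogeneous freedom in \eqref{eq:leftinverse}: the choice $\kappa\in\mathcal{R}\mathcal{H}_\infty$ must be restricted (e.g.\ $\kappa=0$) to avoid unbounded contributions to $U$, though any such choice suffices and does not affect existence.
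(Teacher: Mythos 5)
Your overall route is the same as the paper's---both hinge on the invariant-factor decomposition \eqref{eq:invariantfactor} of $\grave{P}=\Delta P$---but you execute the two directions differently: you decouple into scalar divisibility conditions $\tilde{Y}_{1,i}=d_iW_{1,i}$, whereas the paper proves necessity of $Y(z)=0$ via Forney's minimal-basis theorem and proves boundedness in the converse by a contradiction built on the output zero directions $y_{\bar z}^*\grave{P}(\bar z)$ and a hypothetical RHP root of $U_D$. The scalar decoupling is a legitimate and arguably more transparent alternative, and your derivation of $Y\in\im_{\mathbb{R}(s)}(P)$ directly from $PU=Y$ is cleaner than the paper's projection identity.

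However, there is a genuine error in the step your sufficiency argument leans on: the claim that minimality confines the RHP roots of the $d_i$ to $Z_R$. In terms of the Smith--McMillan data of $P$ the invariant factors of $\grave{P}=\Delta P$ are $d_i=\Delta\epsilon_i/\psi_i$, and minimality only guarantees $\prod_i\psi_i=\Delta$, not $\psi_i=\Delta$; hence every repeated pole of $P$ reappears as a root of some $d_i$. Concretely, $P=\mathrm{diag}\bigl(\tfrac{1}{s-1},\tfrac{1}{s-1}\bigr)$ is minimal with $Z_R=\emptyset$, yet $\grave{P}=\mathrm{diag}(s-1,\,s-1)$ has $d_1=d_2=s-1$ with an RHP root at $s=1$. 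At such a point $Y(z)=0$ is not available and is not what makes the quotient $\tilde{Y}_{1,i}/d_i$ analytic; what does is the explicit factor $\Delta$ sitting inside $\grave{Y}=\Delta Y$, which you never invoke. So your justification is wrong even where the conclusion is salvageable, and the repair requires a multiplicity count at the RHP poles (or a standing stability assumption on $P$, which the paper introduces only later, in Section~\ref{sec:almostiff}); the paper's converse sidesteps this entirely by never characterizing the roots of $\Lambda$. Two smaller points: $W=V_RU$ is analytic in the open RHP but not necessarily in $\mathcal{H}_\infty$ (a polynomial times a bounded function can be unbounded)---analyticity is all you need, so claim only that; and your final lift from the componentwise $\tilde{Y}_{1,i}(z)=0$, which holds only for those $i$ with $d_i(z)=0$ (typically just $i=r$), to the full vector $Y(z)=0$ is a jump from one scalar condition to $n_y$ conditions. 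The paper's own proof makes the analogous jump from $y_z^*L_T(z)\grave{Y}_N(z)=0$ to $\grave{Y}_N(z)=0$, so you are no worse off there, but it does not follow from unimodularity of $\hat{U}_R$ as you assert.
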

\begin{proof}
Note that $e(t)=0$ implies \eqref{eq:algebraic equation} and \eqref{eq:algebraiceq_noGCD} hold and vice versa. We can now proceed to the proof for both directions.

$\implies$Suppose $\exists U(s) \in \mathcal{R}\mathcal{H}_\infty$ satisfying $e(t)=0$.

(1:) $e(t)=0$ means \eqref{eq:leftinverse} holds. Then, pre-multiplying \eqref{eq:leftinverse} by $ \grave{P}(s) = U_R(s)S_m(s)V_R(s)$ and simplifying yields
\begin{align} \label{eq:Y invaiance}
    U_{R1}\hat{U}_{R1}\grave{Y}(s)=\grave{Y}(s)
\end{align}
which means $\grave{Y}(s)$ is invariant under the orthogonal projection onto $\im_{\mathbb{R}(s)}(P)$. Thus, we get $Y(s) \in \im_{\mathbb{R}(s)}(P)$.

(2:) Note that $(I_{n_u}-\hat{V}_{R1}{V}_{R1})$ in \eqref{eq:leftinverse} is pure polynomial and $\kappa \in \mathcal{R}\mathcal{H}_\infty$. Therefore, $(I_{n_u}-\hat{V}_{R1}{V}_{R1})\kappa$ does not contribute any unstable solution(s) to $U(s)$. So, considering only the $\hat{V}_{R1}(s)\Lambda^{-1}(s)\hat{U}_{R1}(s)\grave{Y}(s)$ is enough for stability. Let's consider two cases on $\grave{P}(s)$; C1: $\rank_{\mathbb{R}(s)}(\grave{P})=\min(n_u,n_y)$, C2: $\rank_{\mathbb{R}(s)}(\grave{P})<\min(n_u,n_y)$. 
For C2, re-writing \eqref{eq:leftinverse} and ignoring $(I_{n_u}-\hat{V}_{R1}{V}_{R1})\kappa$, we get
\begin{align}
    &U(s)=\hat{V}_{R1}\Lambda^{-1}\hat{U}_{R1}\grave{Y}(s) \implies
    \Lambda\hat{V}^\ell_{R1}U(s)=\hat{U}_{R1}\grave{Y}(s)\nonumber \\
    &\implies U_{R1}\Lambda\hat{V}^\ell_{R1} U(s)=U_{R1}\hat{U}_{R1}\grave{Y}(s) = \grave{Y}(s) \label{eq:equivalencyofnoGCD}
\end{align}
where there always exist $\hat{V}^\ell_{R1}$ satisfying $\hat{V}^\ell_{R1}\hat{V}_{R1}=I_r$. Now assume that either $\grave{P}(s)$ or $U_{R1}\Lambda\hat{V}_{R1}^\ell$ is not a minimal basis. The minimal basis \( \grave{P}_m(s) \) of an \( n_y \)-dimensional vector space of \( n_u \)-tuples over \( \mathbb{R}[s] \) for \eqref{eq:algebraiceq_noGCD} is given by \( \grave{P}_m(s) = L_T(s)\grave{P}(s) \) for condition C1, and \( \grave{P}_m(s) = L_T(s)U_{R1}\Lambda\hat{V}_{R1}^\ell \) for condition C2. Here, the \( \mathcal{R}\)-Unimodular transformation \( L_T(s) \in \mathbb{R}[s]^{n_y \times n_y} \), which yields the minimal basis, exists with constant (degree zero) determinant (see \cite[Remark 2]{forney1975minimal}). Then, left multiplying $L_T(s)$ to \eqref{eq:algebraiceq_noGCD} or \eqref{eq:equivalencyofnoGCD} results in
\begin{align} \label{eq:LTtransform_expanded}
 \grave{P}_m(s)U(s)=L_T(s)\grave{Y}(s)
\end{align}
\noindent Now define $\grave{Y}(s) \triangleq \frac{1}{\Delta_Y}{\grave{Y}_N}(s)$ such that $\grave{Y}_N(s) \in \mathbb{R}[s]^{n_y}$, and the scalar $\Delta_Y \in \mathbb{R}[s]$. So, multiplying \eqref{eq:LTtransform_expanded} by $\Delta_Y$ yields
\begin{align} \label{eq:LTtransform_expanded_purepoly}
\grave{P}_m(s)\Delta_YU(s)=L_T(s)\grave{Y}_N(s)
\end{align}
Note that $\grave{P}_m(s)$ is a minimal polynomial basis and the right-hand side of \eqref{eq:LTtransform_expanded_purepoly} is pure polynomial, so based on \textit{Adapted Forney's Theorem}, $\Delta_YU(s)$ must be polynomial. It means that $\Delta_Y$ captures all poles of $U(s)$. Moreover, non-singular transformations of $\grave{P}(s)$ preserves the invariant zeros \cite{misra1994computation} so that $y^*_z\grave{P}_m(z)=0u^*_z=0$. Left multiplying $y^*_z$ to \eqref{eq:LTtransform_expanded_purepoly} yields
\begin{align} 
\label{eq:LTtransform_expanded_purepoly_diraction}
\underbrace{y^*_z\grave{P}_m(z)}_{=0}\Delta_Y(z)U(z)=\underbrace{y^*_zL_T(z)}_{\ne 0}\grave{Y}_N(z)
\end{align}
Since the left-hand side of \eqref{eq:LTtransform_expanded_purepoly_diraction} becomes zero, $\det(L_T) = \text{constant} \ne 0$ (by polynomial-unimodularity), and $1/\Delta_Y$ is stable because $U(s) \in \mathcal{RH}_\infty$ is assumed in this direction, the only way to satisfy the equality is $\grave{Y}_N(z)=0$ implying $Y(z)=0$. 

$\impliedby$ Suppose that we have $Y(s) \in \im_{\mathbb{R}(s)}(P)$ and $Y(z)=0, \forall z \in Z_R$. For C1 \& C2, since $Y(s) \in \im_{\mathbb{R}(s)}(P)$, we have $\rank_{\mathbb{R}(s)}(P)=\rank_{\mathbb{R}(s)}([P\ : \ Y])$, which shows that we always have solution(s) for $U(s)$ in \eqref{eq:algebraiceq_noGCD} yielding $e(t)=0$ in \eqref{pr:error}, whether $U(s)$ is stable or not.

For boundedness, consider \eqref{eq:rhpzeros} and rewrite \eqref{eq:algebraiceq_noGCD} as 
\begin{align} \label{eq:algebraiceq_noGCD_Ufactored}
    \grave{P}(s)U(s) = \grave{P}(s)\frac{U_N(s)}{U_D(s)} = \grave{Y}(s) \\
    \grave{P}(s)U_N(s)=U_D(s)\grave{Y}(s) \label{eq:algebraiceq_noGCD_Ufactored2}
\end{align}
where $U_N(s) \in \mathbb{R}[s]^{n_u}$ and scalar $U_D(s) \in \mathbb{R}[s]$. Now define a new scalar $z^+(s)$ which contains all RHP zeros of $\grave{P}(s)$ as
 \begin{align*} 
    z^+(s)=\prod_{z \in Z_R \bigcap \mathbb{R}}(s - z)^{m_z}\prod_{z \in Z_R\bigcap (\mathbb{C}\setminus \mathbb{R})}(s - z)^{m_z^\ast} (s - {z}^\ast)^{m_z^\ast}
\end{align*} 
where $z^+(s) \in \mathbb{R}[s]$, $m_z$ and $m_z^\ast$ denote the multiplicities of $z$ and ${z}^*$ respectively. Thus, there exist non-zero vector $y^\ast_z \ (\text{or } u_z)$ s.t. $y^\ast_z\grave{P}(z)=0 $ (or $ \grave{P}(z)u_z=0.y_z$) $\forall z \in Z_R$. 

Now suppose $\det(U_D(\bar{z})) = 0$ for some $\bar{z}$ where $\Re(\bar{z})>0$ which means $U(s)$ is unstable. Either, suppose $(s-\bar{z}) \in z^+(s)$, then left multiplying $y^\ast_{\bar{z}}$ to \eqref{eq:algebraiceq_noGCD} and evaluating at $s=\bar{z}$ yield
\begin{align}\label{eq:contra1_th1_nec}
  \Biggl( \underbrace{(s-\bar{z})\begin{bmatrix}
\ast(s)_1 \\
\vdots \\
\ast(s)_{n_y}
\end{bmatrix}}_{= y^\ast_{\bar{z}}\grave{P}(s)}\frac{U_N(s)}{\underbrace{(s-\bar{z})U'_D(s)}_{=U_D(s)}} \Biggr) (\bar{z}) \ne 0 = y^\ast_{\bar{z}}\underbrace{\grave{Y}(\bar{z})}_{= 0}
\end{align}
or suppose $(s-\bar{z}) \notin z^+(s)$ then left multiplying $y^\ast_{\bar{z}}$ to \eqref{eq:algebraiceq_noGCD_Ufactored2} and evaluating at $s=\bar{z}$ yield
\begin{align} \label{eq:contra2_th1_nec}
  \underbrace{y^*_{\bar{z}} \grave{P}(\bar{z})}_{\ne 0}{U_N(\bar{z})} \ne 0 = {y^*_{\bar{z}}} \grave{Y}(\bar{z}) \underbrace{U_D(\bar{z})}_{= 0}
\end{align}
\eqref{eq:contra1_th1_nec} and \eqref{eq:contra2_th1_nec}
lead to a contradiction. Thus
$U(s)$ includes only LHP poles. 
The proof is now complete. 
\end{proof}
\begin{remark}
While the algebraic equation is inspired by Exact Model Matching, we explicitly define necessary and sufficient conditions on the system's output, applicable even to non-minimum phase and singular systems. {Specifically, the condition \(Y(s) \in \im_{\mathbb{R}(s)}(P)\) defines the set of reachable outputs, while \(Y(z) = 0\) for all \(z \in Z_R\) defines the set of outputs for which a corresponding stable input exists. Relaxing either condition results in losing the property `$=0$'' over the $e(t)$.}
\end{remark}
\begin{remark}
For the counterpart involving left inverses in contexts such as fault detection, see \cite{Loreto2023,estrada2007left}. These works give necessary and sufficient conditions for invertibility under the assumptions of full-rank $P(s)$, $D$ and with $Z_R=\emptyset$.   
\end{remark}
Assuming $Z_R = \emptyset$ yields the following corollaries:
\begin{corollary} \label{coro:min_phase_inverses}
Assume $P(s)$ is square and full rank so that $U_{R}=U_{R1},V_{R}=V_{R1}, (I_{n_u}-\hat{V}_{R1}{V}_{R1})=0$, and $ Z_R = \emptyset$, then \eqref{eq:leftinverse} becomes $U(s) ={\hat{V}_{R1}\Lambda^{-1}(s)\hat{U}_{R1}}Y(s)$.
\end{corollary}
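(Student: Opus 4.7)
The plan is to derive the corollary as a direct specialization of the general right-inverse formula \eqref{eq:leftinverse} under the three stated hypotheses. First, I would observe that since $P(s)$ is square ($n_y=n_u$) and full rank, and $\grave{P}(s)=\Delta P(s)$ differs from $P(s)$ only by the scalar $\Delta\in\mathbb{R}[s]$, we have $r=\rank_{\mathbb{R}(s)}(\grave{P})=n_y=n_u$. This collapses the block partition in the Smith normal form \eqref{eq:invariantfactor}: the zero-padding rows/columns of dimensions $(n_y-r)$ and $(n_u-r)$ simply disappear, so only the $U_{R1}$ and $V_{R1}$ blocks remain, matching the stated identifications $U_R=U_{R1}$ and $V_R=V_{R1}$, with $\Lambda(s)\in\mathbb{R}[s]^{n_u\times n_u}$ now filling the entire Smith form.

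Second, I would exploit $\mathcal{R}$-unimodularity of $V_R$: by the construction of $\hat{V}_R$ in \eqref{eq:leftinverse}, $V_R(s)\hat{V}_R(s)=I_{n_u}$, so in the square full-rank case $\hat{V}_{R1}V_{R1}=\hat{V}_R V_R=I_{n_u}$. This annihilates the homogeneous term $(I_{n_u}-\hat{V}_{R1}V_{R1})\kappa$, removing the free parameter $\kappa$ (and correspondingly $\bar{\kappa}$) and making the right inverse unique.

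Third, I would verify that the hypotheses of Theorem~\ref{th:algebraic_iff} are met so that the resulting $U(s)$ is bounded. Since $P$ is square and full rank, $\im_{\mathbb{R}(s)}(P)=\mathbb{R}(s)^{n_y}$, so $Y(s)\in\im_{\mathbb{R}(s)}(P)$ holds automatically; and the RHP zero-cancellation condition $Y(z)=0,\ \forall z\in Z_R$ is vacuously satisfied by $Z_R=\emptyset$. Hence $U(s)\in\mathcal{RH}_\infty$, i.e., the inverse is stable.

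Finally, substituting these simplifications into \eqref{eq:leftinverse}, the block product $[\hat{V}_{R1}\ \hat{V}_{R2}]\,\mathrm{diag}(\Lambda^{-1},\mathbf{0})\,[\hat{U}_{R1};\hat{U}_{R2}]$ collapses to the single-block product $\hat{V}_{R1}\Lambda^{-1}(s)\hat{U}_{R1}$ acting on the right-hand side, yielding the claimed formula (with the scalar $\Delta$ from $\grave{Y}=\Delta Y$ absorbed into the invariant-factor denominators). No genuine obstacle is expected: the corollary is essentially an algebraic bookkeeping exercise that recovers the classical square-minimum-phase inverse as the simplest case of the general framework of Theorem~\ref{th:algebraic_iff}.
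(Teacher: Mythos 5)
Your proposal is correct and follows essentially the same route as the paper, which presents this corollary as a direct specialization of \eqref{eq:leftinverse}: full rank and squareness collapse the Smith-form blocks and force $\hat{V}_{R1}V_{R1}=I_{n_u}$, killing the homogeneous term, while $Z_R=\emptyset$ and $\im_{\mathbb{R}(s)}(P)=\mathbb{R}(s)^{n_y}$ make Theorem~\ref{th:algebraic_iff} apply trivially. Your remark about the scalar $\Delta$ relating $\grave{Y}$ to $Y$ being absorbed into the invariant-factor denominators is a careful touch the paper glosses over, but it does not change the substance.
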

\begin{corollary} \label{coro:min_phase_left_pseudo_inverses}
Assume $P(s)$ is non-square and full rank, so that $V_{R}=V_{R1}, (I_{n_u}-\hat{V}_{R1}{V}_{R1})=0$, and $ Z_R = \emptyset$, then \eqref{eq:leftinverse} becomes $U(s) = \begin{bmatrix}
    \hat{V}_{R1}\Lambda^{-1}(s) &
   \textbf{0}_{n_u \times (n_y-n_u)}
\end{bmatrix}
\begin{bmatrix}
    \hat{U}_{R1} \\
    \hat{U}_{R2}
\end{bmatrix}Y(s)
= (P^*(s)P(s))^{-1}P^*(s)Y(s) = P^\dagger(s)Y(s)$.

\end{corollary}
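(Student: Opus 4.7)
The plan is to substitute the three stated hypotheses into the general expression for $U(s)$ in equation~\eqref{eq:leftinverse}, simplify the resulting block decomposition to match the intermediate form displayed in the corollary, and then identify the collapsed operator with the Moore--Penrose left pseudo-inverse by invoking uniqueness of the solution on $\im_{\mathbb{R}(s)}(P)$.

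First I would apply each hypothesis in turn. The condition $V_R = V_{R1}$ forces $r = n_u$, so $P(s)$ has full column rank; consequently $\hat{V}_{R2}$ has no columns, and the lower block rows of the central matrix in~\eqref{eq:leftinverse} vanish. The condition $(I_{n_u} - \hat{V}_{R1}V_{R1}) = 0$ kills the free-parameter term involving $\kappa$, so no homogeneous component survives. Finally, $Z_R = \emptyset$ ensures that the diagonal entries $d_i(s)$ of $\Lambda(s)$ have no roots in the closed right-half plane, so $\Lambda^{-1}(s) \in \mathcal{R}\mathcal{H}_\infty^{r \times r}$ and the resulting $U(s)$ is stable. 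With these substitutions the central matrix reduces to $[\Lambda^{-1}(s)\ \mathbf{0}_{n_u \times (n_y - n_u)}]$, matching the displayed intermediate form, which after one more multiplication collapses to $U(s) = \hat{V}_{R1}\Lambda^{-1}(s)\hat{U}_{R1}Y(s)$ (structurally identical to Corollary~\ref{coro:min_phase_inverses}, but now applied to a tall $P$).

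Next I would verify that $\hat{V}_{R1}\Lambda^{-1}\hat{U}_{R1}$ is a valid left inverse of the operator $P$ built into~\eqref{eq:algebraic equation}. Writing $\grave{P}(s) = U_{R1}\Lambda V_{R1}$ and using the identities $\hat{U}_{R1}U_{R1} = I_{n_u}$ (extracted from $\hat{U}_R U_R = I_{n_y}$) together with $\hat{V}_{R1}V_{R1} = I_{n_u}$, the product telescopes to $I_{n_u}$, confirming left-invertibility up to the scalar factor $\Delta = \det(sI-A)$ carried between $Y$ and $\grave{Y}$.

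The final step is to identify this particular left inverse with the Moore--Penrose formula $(P^{*}P)^{-1}P^{*}$. The main obstacle is that a tall full-column-rank rational matrix admits infinitely many left inverses, and the two expressions are genuinely unequal as operators on all of $\mathbb{R}(s)^{n_y}$. The resolution is that both left inverses agree on $\im_{\mathbb{R}(s)}(P)$: whenever $Y(s) \in \im_{\mathbb{R}(s)}(P)$, the equation $P(s)U(s) = Y(s)$ has a unique solution, so every left inverse produces the same $U(s)$ when applied to such a $Y$. Because exact inversion already requires $Y(s) \in \im_{\mathbb{R}(s)}(P)$ by Theorem~\ref{th:algebraic_iff}, the Moore--Penrose form $(P^{*}P)^{-1}P^{*}Y$ recovers the same $U(s)$, completing the stated chain of equalities and yielding $U(s) = P^{\dagger}(s)Y(s)$.
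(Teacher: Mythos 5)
Your proof is correct and follows the route the paper intends: the corollary is stated without proof, as a direct substitution of the hypotheses $r=n_u$, vanishing $\kappa$-term, and $Z_R=\emptyset$ into \eqref{eq:leftinverse}, which is exactly what you do. Your additional step — observing that $\hat{V}_{R1}\Lambda^{-1}\hat{U}_{R1}$ and $(P^*P)^{-1}P^*$ are distinct left inverses of the full-column-rank $P$ that nevertheless coincide on $\im_{\mathbb{R}(s)}(P)$, the only set on which exact inversion is posed — is a correct and genuinely needed justification of the final equality in the chain, which the paper asserts without comment; your bookkeeping of the scalar factor $\Delta$ between $Y$ and $\grave{Y}$ is likewise a point the paper glosses over.
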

\subsection{Almost Necessary and Sufficient Conditions}\label{sec:almostiff}
In this section, we propose approximate remedies for non-minimum phase systems. Specifically, by using \(\mathcal{H}_\infty\)-theory, we relax the condition 
\(\bigl(Y(z) = 0,\ \forall z \in Z_R\bigr)\) 
yielding 
\begin{align*}
   ( e(t)\to 0, \ \Vert u(t) \Vert_\infty < \infty ) \iff  ( Y(s) \in \im_{\mathbb{R}(s)}(P) )
    \end{align*}
for any unknown initial state, \(\bar{x}_0\), of the inverse system.

Assumption~\ref{ass:minimalABC} ensures that we can assume $P(s)$ is stable, either inherently or via stabilization, thus avoiding RHP pole/zero cancellations between the physical components on the feedforward path \cite[Section 4.7.1]{skogestad2005multivariable}, as is typical in stable right inversion \cite{romagnoli2019general, devasia2002should}. Note also that a stable (or stabilized) forward system $\Xi$, thus $P(s)$, in \eqref{eq:ss0} produces an unbounded $y(t)$ \textit{if and only if} $u(t)$ is unbounded. Therefore, we assume \(Y(s) \in \mathcal{RH}_\infty\) to ensure well-behaved operation.

Now, we define a \textit{virtual} loop in the sense of classical feedback structure shown by Fig.\ref{f:blockdiag}.(a). Given the system $P(s)$ as in \eqref{eq:algebraic equation}, suppose we have a virtual controller, say $K(s) \in \mathbb{R}(s)^{n_u \times n_y}$. The key transfer functions within the virtual feedback loop are defined as follows: \( T_i(s) \triangleq (I_{n_u} + K(s)P(s))^{-1}K(s)P(s) \); \( S_i(s) \triangleq (I_{n_u} + K(s)P(s))^{-1} \); and \( L_i(s) \triangleq K(s)P(s) \). By breaking the loop at the \textit{output}, we have $T_o,S_o,L_o$.
\begin{theorem}[Internal Stability, \cite{zhou1998essentials}] \label{th:internal_stab}
Under Assumption \ref{ass:minimalABC}, the closed-loop virtual system is internally stable iff
\begin{align}\label{eq:internal_stab_matrix}
    \begin{bmatrix}
        I & K\\
        -P & I
    \end{bmatrix}^{-1} \in \mathcal{R}\mathcal{H}_\infty
\end{align}
\end{theorem}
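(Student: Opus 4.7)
The plan is to identify the four closed-loop maps from exogenous injections to internal loop signals with the entries of the stated matrix inverse, and then invoke the definition of internal stability as boundedness of every such map. This is a classical characterization, and the proof is essentially a direct computation paired with a no-hidden-mode argument.

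First, I inject two independent exogenous signals $d_1$ before the controller $K$ and $d_2$ before the plant $P$ in the virtual loop of Fig.~\ref{f:blockdiag}(a). Let $v_1 \in \mathbb{R}(s)^{n_u}$ and $v_2 \in \mathbb{R}(s)^{n_y}$ denote the signals leaving the two summing junctions. The loop equations read $v_1 = d_1 - K v_2$ and $v_2 = d_2 + P v_1$, which in matrix form become
\begin{equation*}
\begin{bmatrix} I & K \\ -P & I \end{bmatrix}\begin{bmatrix} v_1 \\ v_2 \end{bmatrix} = \begin{bmatrix} d_1 \\ d_2 \end{bmatrix}.
\end{equation*}
Well-posedness of the interconnection is equivalent to this coefficient matrix being invertible over $\mathbb{R}(s)$, and in that case the four closed-loop transfer matrices from $(d_1,d_2)$ to $(v_1,v_2)$ are precisely the entries of the inverse. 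A direct Schur-complement expansion recovers the familiar entries $S_i$, $-S_i K$, $P S_i$, $S_o$, i.e.\ the four sensitivity-type functions already named in the paragraph preceding the theorem.

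Second, by definition the closed-loop virtual system is internally stable iff all four of these maps lie in $\mathcal{RH}_\infty$, which is exactly the claimed condition. The direction ($\Leftarrow$) is immediate: if every entry of the inverse lies in $\mathcal{RH}_\infty$, then for any bounded $(d_1,d_2)$ the internal signals $(v_1,v_2)$—and hence every other signal in the loop, being obtained by passing $v_1,v_2$ through the (assumed proper) $P$ and $K$—stay bounded.

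For the direction ($\Rightarrow$), I would argue by contradiction: if any entry of the inverse had a pole in $\overline{\mathbb{C}^+}$, a suitably chosen bounded injection at $(d_1,d_2)$ would excite that mode, producing an unbounded internal response and contradicting internal stability. The delicate point—and the main obstacle I would expect—is ruling out hidden unstable modes that could be cancelled between $P$ and $K$ on the direct input-to-output path but still appear as internal instabilities. This is where Assumption~\ref{ass:minimalABC} is essential: working with minimal realizations, every eigenvalue of the closed-loop $A$-matrix is both controllable from and observable at the injection pair $(d_1,d_2)$, so any RHP closed-loop pole must surface in at least one of the four entries. Once this no-hidden-mode fact is established via a state-space realization of the $2\times 2$ block system, the equivalence in \eqref{eq:internal_stab_matrix} follows immediately.
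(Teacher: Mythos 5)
The paper does not prove this theorem at all: it is quoted verbatim from the cited reference \cite{zhou1998essentials} as a standard result, so there is no in-paper argument to compare against. Your reconstruction is the classical textbook proof and is correct in substance: the loop equations with injections $(d_1,d_2)$ give exactly the coefficient matrix $\bigl[\begin{smallmatrix} I & K \\ -P & I\end{smallmatrix}\bigr]$, its inverse collects the four maps $S_i$, $-S_iK$, $PS_i$, $S_o$, and stabilizability/detectability of the realizations (here supplied by Assumption~\ref{ass:minimalABC} for $P$, and implicitly for the synthesized $K$) is what makes the transfer-matrix condition equivalent to internal stability. The one refinement I would suggest: the no-hidden-mode argument is needed not only for ($\Rightarrow$) but also for ($\Leftarrow$) if internal stability is taken in the state-space sense (closed-loop $A$-matrix Hurwitz), since stable entries of the inverse could otherwise coexist with an unstable uncontrollable or unobservable closed-loop mode; minimality rules this out in both directions simultaneously.
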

Now consider augmenting the performance weight ${W}_{P}$, ${W}_{P}~\triangleq \text{diag} [ {W_{P1}}(s) \ {W_{P2}}(s) \ \dots \ {W_{Pn_y}}(s)]$ to form the linear fractional transformation (LFT) as
\begin{align} \label{eq:augmentedG}
\begin{bmatrix}
        z(t)\\
    e_\upsilon(t)
\end{bmatrix}
={\begin{bmatrix}
    W_P & W_PP\\
    -I & -P
\end{bmatrix}}\begin{bmatrix}
    w(t)\\
  u_\upsilon(t)
\end{bmatrix}=G\begin{bmatrix}
    w(t)\\
  u_\upsilon(t)
\end{bmatrix}
\end{align}
where each scalar in $W_P$ is as defined in \cite[Equation 22]{kurkccu2018robust}, $e_\upsilon(t) \triangleq r_\upsilon(t)-y_\upsilon(t)$, and $z(t)\triangleq W_Pe(t)$. The LFT is then given by $\mathcal{F}_l(G,K)\triangleq {G}_{11}+{G}_{12}{K}({I}-{G}_{22}{K})^{-1}{G}_{21}$ where the $\mathcal{H}_\infty$ control problem involves finding $K$ such that
\begin{align}\label{eq:min_maxSV}
    \minimize_K\ &\max_{\omega}\bar{\sigma}\Bigl({G}_{11}+{G}_{12}{K}({I}-{G}_{22}{K})^{-1}{G}_{21}\Bigl)(j\omega) \nonumber\\
    \subj2 & \begin{bmatrix}
        I & K\\
        -P & I
    \end{bmatrix}^{-1} \in \mathcal{R}\mathcal{H}_\infty.
\end{align}
Respecting the analytic limits (waterbed/Bode, logarithmic-integral and interpolation bounds) in \cite{chen2000logarithmic} and leveraging the Youla-based convexification of \eqref{eq:min_maxSV} described in \cite[Sec.~3.3]{anderson201639system}, we ensure \(
\| \mathcal{F}_l(G,K) \|_\infty = \gamma < \infty .
\)

By letting \(\epsilon_i \to 0\), we can replace all approximate integrators in \(K\) with pure integrators. Moreover
If \(\bar{\sigma}(S_i(j(0,\bar{\omega}])) < \mathbb{E}[N^2]\), then \(\bar{\sigma}(S_i(j(0,\bar{\omega}])) = 0\) where $\mathbb{E}[N^2]$ denotes the expected value mean square of random noise in dB .
\begin{remark}\label{rem:no knowledge synthesis}
The approximate inverse described by the following theorems can be constructed with no knowledge of $\bar{x}_0$.
\end{remark}
\begin{theorem}\label{th:nm_phase_inverses}
Consider \( P(s) \) as defined in \eqref{eq:algebraic equation}-\eqref{eq:algebraic equation2} with \(\text{rank}_{\mathbb{R}(s)}(P) = r = n_y \leq n_u \), and \( Z_R \neq \emptyset \). Define the approximate inverse \(\Xi_a^\Gamma:\mathbb{Y}\times \mathbb{X} \to \mathbb{U}\) with unknown $\bar{x}_0$ as
\begin{equation}
\label{eq:almostinverse}
\Xi_a^\Gamma(s) \;\triangleq\; S_i(s)K(s)
\;=\;\small \left(\begin{array}{c|c}
   A_i^a & B_i^a \\ \hline
   C_i^a & D_i^a
\end{array}\right)\normalsize,
\end{equation}
then, by noting \eqref{eq:TFM} and rewriting \eqref{eq:uinvpr}, the control input is
\begin{equation} \label{eq:almostinverse_time}
  u_{inv}(t) = \mathscr{L}^{-1}\{S_i(s)K(s)Y(s)\} + \Xi_a^\Gamma(0,\bar{x}_0) \,
\end{equation}
which satisfies $\|u_{inv}(t)\|_\infty<\infty$ and
\[
\|e(t)\| \leq \alpha \|e(0)\| e^{-\beta t}, \ \forall t > 0, \text{ for some } \alpha,\beta>0. \]
\end{theorem}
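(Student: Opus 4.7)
The plan is to split the error in the Laplace domain and then bound each piece using internal stability and the $\mathcal{H}_\infty$ design. Taking $\mathscr{L}$ of \eqref{pr:error} and separating the zero-state and zero-input responses of $\Xi_a^\Gamma$ in \eqref{eq:almostinverse_time}, I would write
\begin{equation*}
E(s) \;=\; Y(s) \;-\; P(s)S_i(s)K(s)Y(s) \;-\; P(s)U_{a,ic}(s),
\end{equation*}
where $U_{a,ic}(s) = C_i^a(sI-A_i^a)^{-1}\bar{x}_0$ captures the zero-input response of $\Xi_a^\Gamma$ from the unknown $\bar{x}_0$. Using the push-through $P(I+KP)^{-1}K = PK(I+PK)^{-1} = I-S_o$, together with the observation $\mathcal{F}_l(G,K) = W_P S_o$ read off from \eqref{eq:augmentedG}, the error collapses to
\begin{equation*}
E(s) \;=\; S_o(s)Y(s) \;-\; P(s)U_{a,ic}(s).
\end{equation*}

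For the boundedness claim, Theorem \ref{th:internal_stab} combined with feasibility of \eqref{eq:min_maxSV} gives $S_i K \in \mathcal{RH}_\infty$ and $A_i^a$ Hurwitz. Since $Y \in \mathcal{RH}_\infty$ is assumed, the convolution $\mathscr{L}^{-1}\{S_iKY\}$ lies in $\mathcal{L}_\infty$; the additive transient $\Xi_a^\Gamma(0,\bar{x}_0) = C_i^a e^{A_i^a t}\bar{x}_0$ is bounded and exponentially decaying. Their sum therefore satisfies $\|u_{inv}\|_\infty < \infty$.

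For the error bound, I would handle the two pieces separately. The term $\mathscr{L}^{-1}\{PU_{a,ic}\}$ is a cascade of the stable $P$ with a signal carrying only the Hurwitz modes of $A_i^a$, so a standard cascade-stability estimate yields $\|\mathscr{L}^{-1}\{PU_{a,ic}\}(t)\| \leq c_1\|\bar{x}_0\|e^{-\beta_1 t}$ for some $c_1,\beta_1>0$. For $\mathscr{L}^{-1}\{S_o Y\}$ I would invoke the $\mathcal{H}_\infty$ bound $\|W_P S_o\|_\infty = \gamma < \infty$; the limit $\epsilon_i \to 0$ turns the approximate integrators in $K$ into pure integrators so that $S_o$ has integrator roll-off at low frequencies, and the push-through identity $S_o P = P S_i$ lets me transfer the stated noise-floor convention $\bar\sigma(S_i(j(0,\bar\omega])) = 0$ to $S_o$ on the column space of $P$. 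The $S_o Y$ contribution is thus annihilated on the working band, leaving no persistent steady-state piece. Combining the two terms and converting $\|\bar{x}_0\|$ into $\|e(0)\|$ via the unique minimal realization of the virtual closed loop yields $\|e(t)\| \leq \alpha \|e(0)\|e^{-\beta t}$ with $\alpha,\beta>0$.

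The main obstacle is the forced-term step: a generic $Y \in \mathcal{RH}_\infty$ has support at every frequency, so an $\mathcal{H}_\infty$ norm bound by itself does not deliver exponential time-domain convergence. I would make the argument rigorous by realizing $S_o Y$ as the output of a single minimal state-space system whose dynamics matrix inherits the eigenvalues of the Hurwitz virtual closed loop, then invoking the noise-floor convention to collapse the persistent component to zero, so that the remainder inherits the exponential rate of the closed-loop spectrum and can be merged into the transient absorbed by $\alpha,\beta$.
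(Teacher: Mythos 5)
Your decomposition is exactly the paper's: the same split of $u_{inv}$ into zero-state and zero-input parts, the same push-through identity collapsing the error to $E(s)=S_o(s)Y(s)-P(s)U_{a,ic}(s)$, and the same internal-stability argument (Theorem~\ref{th:internal_stab}) for $\|u_{inv}\|_\infty<\infty$ and for the exponential decay of the $P\,U_{a,ic}$ transient. Where you diverge is the forced term $S_oY$, and there your route is both more complicated and less sound than the paper's. The obstacle you flag --- that an $\mathcal{H}_\infty$ norm bound alone cannot give pointwise exponential decay --- is real if the norm bound were the only tool, but it dissolves once you use the structure you already have: $S_o$ and $Y$ are both \emph{real-rational} members of $\mathcal{RH}_\infty$, so $S_oY$ is a stable rational function with all poles in the open left half-plane, and $\mathscr{L}^{-1}\{S_oY\}$ is a finite sum of modes $t^k e^{p_i t}$ with $\Re(p_i)<0$, i.e.\ it decays exponentially outright. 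That is precisely the one-line argument the paper uses. Your substitute --- invoking the $\epsilon_i\to0$ pure-integrator limit and the noise-floor convention $\bar\sigma(S_i(j(0,\bar\omega]))=0$ to ``annihilate'' $S_oY$ on a working band --- does not establish exponential time-domain convergence: band-limited smallness of a frequency response is not a decay rate, and the noise-floor statement is a design convention, not an identity you can transfer to $S_o$ on $\im(P)$ and then integrate against an arbitrary $Y$. Drop that machinery and replace it with the rationality argument and your proof matches the paper's. (Both you and the paper share the same residual cosmetic weakness: neither argument genuinely ties the constant in front of $e^{-\beta t}$ to $\|e(0)\|$, which degenerates when $e(0)=0$; your gesture toward a minimal realization of the closed loop is at least an attempt to address this.)
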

\begin{proof}
We decompose the control input as
\begin{align}
u_{inv}(t) &= u_{inv}^{(f)}(t) + u_{inv}^{(0)}(t) = \Xi_a^\Gamma(y(t),0) + \Xi_a^\Gamma(0,\bar{x}_0) \nonumber \\&=\mathscr{L}^{-1}\{S_i(s)K(s)Y(s)\}+\Xi_a^\Gamma(0,\bar{x}_0),
\end{align}
From \eqref{eq:TFM}, taking the Laplace transform of \(e(t)\) yields
\begin{align}\label{EinS}
E(s) \;=\; Y(s) - P(s)\,U_{inv}(s).
\end{align}
Substituting
\(
U_{inv}(s) = S_i(s)K(s)Y(s) + U_{inv}^{(0)}(s)
\)
and noting \(S_i(s) = \bigl(I + K(s)\,P(s)\bigr)^{-1}\), we obtain
\begin{align*}
E(s) = Y(s) &- P(s)\,\bigl(I + K(s)\,P(s)\bigr)^{-1}K(s)\,Y(s)
\\&-P(s)\,U_{inv}^{(0)}(s).
\end{align*}
Define 
\(
T_o(s) \triangleq \bigl(I + PK\bigr)^{-1}PK
\)
and 
\(
S_o(s) \triangleq I - T_o(s),
\)
so 
\[
E(s) \;=\; Y(s)\,S_o(s) \;-\; P(s)\,U_{inv}^{(0)}(s).
\]
By Theorem~\ref{th:internal_stab}, we have \(S_i(s)K(s) \in \mathcal{RH}_\infty\) and \(S_o(s)\in\mathcal{RH}_\infty\), implying the realization of \(\Xi_a^\Gamma\) is stable. Hence 
\begin{align} \label{uinv(0)}
\lim_{t\to\infty} C_i^a\,e^{A_i^a\,t}\,\bar{x}_0 = 0 \implies \lim_{s\to0}sP(s)\,U_{inv}^{(0)}(s) =0. 
\end{align}
Consequently, since $Y(s)\in \mathcal{RH}_\infty$, \(\mathscr{L}^{-1}\{Y(s)\,S_o(s)\}\) decays exponentially, and \eqref{uinv(0)}, we have 
\begin{align*}
\|e(t)\| \le \alpha\,\|e(0)\|\,e^{-\beta t},\ \forall t>0,
\end{align*}
for \(\alpha,\beta>0\). Finally, \(\mathscr{L}^{-1}\{S_i(s)K(s)Y(s)\}\) is bounded (since \(S_i(s)K(s), Y(s)\in\mathcal{RH}_\infty\)), and since \(\Xi_a^\Gamma(0,\bar{x}_0)\) is finite, so \(\|u_{inv}(t)\|_\infty<\infty\). This completes the proof.
\end{proof}

Now let us define $P_p(s) \in \mathbb{R}(s)$ which is a scalar approximation of MIMO $P(s)$ in \eqref{eq:algebraic equation}. To do it first consider one entry of $P(s)$ for $n_u<n_y$ (taking an average of an entire column is also an option). Because of Assumption \ref{ass:minimalABC}, the characteristic equation of $P_p(s)$, $\Delta_p$, equals $\Delta$. Then, if for any RHP zeros of $P(s)$ does not have an RHP zero of $P_p(s)$, zero augmentation as $P_p(s)=z^+(s)P_p(s)$ is employed. Then, substitute $P$ with $P_p$ in (\ref{eq:augmentedG}) and by letting $\epsilon_i \to 0$ solve (\ref{eq:min_maxSV}) to get stabilizing $K_p(s) \in \mathbb{R}(s)$, virtual loop's control system  over the scalar approximation $P_p(s)$. Then the scalar complementary sensitivity function for this modified virtual loop is given by 
\begin{align}\label{eqTp}
T_p(s) \triangleq \Bigr( P_p(s)K_p(s)\Bigl)/\Bigr( 1+P_p(s)K_p(s)\Bigl)
\end{align}
\begin{theorem}\label{th:nm_phase_pseudoinverses}
Consider \(\text{rank}_{\mathbb{R}(s)}(P) = n_u < n_y\) and \( Z_R \neq \emptyset \). Then, for unknown $\bar{x}_0$, the approximate inverse is 
\begin{equation} \label{eq:almostpinverse}
  \Xi_a^\Gamma(s) = T_p(s)P^{\dagger}(s)=\;
\small \left(\begin{array}{c|c}
   A_i^a & B_i^a \\ \hline
   C_i^a & D_i^a
\end{array}\right)\normalsize,
\end{equation}
and the corresponding control input is
\begin{equation} \label{eq:almostinverse_ns_time}
  u_{inv}(t) = \mathscr{L}^{-1}\{T_p(s)P^{\dagger}(s)Y(s) \}+ \Xi_a^\Gamma(0,\bar{x}_0).
\end{equation}
Assuming $Y(s) \in \im_{\mathbb{R}(s)}(P)$, then $u_{inv}(t)$ satisfies
\begin{equation} \label{exp_decay_nmppseudo}
    \|e(t)\| \leq \alpha \|e(0)\| e^{-\beta t}, \forall t > 0,
\end{equation}
for some $\alpha,\beta>0$, while ensuring $\|u_{inv}(t)\|_\infty<\infty$. 
\end{theorem}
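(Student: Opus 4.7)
The plan is to mirror the structure of the proof of Theorem~\ref{th:nm_phase_inverses} but exploit two features peculiar to the tall case: the left-inverse identity $P(s)P^{\dagger}(s)Y(s)=Y(s)$, which holds precisely because $Y(s)\in\im_{\mathbb{R}(s)}(P)$, and the \emph{scalar} nature of $T_p(s)$, which commutes with every matrix in $\mathbb{R}(s)^{n_y\times n_u}$. First I would decompose
\begin{align*}
u_{inv}(t) = u_{inv}^{(f)}(t)+u_{inv}^{(0)}(t) = \mathscr{L}^{-1}\{T_p(s)P^{\dagger}(s)Y(s)\}+\Xi_a^\Gamma(0,\bar{x}_0),
\end{align*}
apply the Laplace transform to \eqref{pr:error} as in \eqref{EinS}, and substitute to obtain
\begin{align*}
E(s) = Y(s) - T_p(s)\,P(s)P^{\dagger}(s)\,Y(s) - P(s)\,U_{inv}^{(0)}(s).
\end{align*}

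The crux is the collapse $P(s)P^{\dagger}(s)Y(s)=Y(s)$, which reduces the error to
\begin{align*}
E(s) = \bigl(1-T_p(s)\bigr)Y(s) - P(s)\,U_{inv}^{(0)}(s) = S_p(s)\,Y(s) - P(s)\,U_{inv}^{(0)}(s),
\end{align*}
with $S_p(s)\triangleq 1-T_p(s)$ the scalar sensitivity of the virtual loop $(P_p,K_p)$. Internal stability of the scalar surrogate yields $S_p(s)\in\mathcal{RH}_\infty$, and the standing hypothesis $Y(s)\in\mathcal{RH}_\infty$ makes $S_p(s)Y(s)$ a rational proper stable function, so its inverse Laplace transform is a finite sum of modal terms $t^k e^{-\sigma t}$ with $\sigma>0$, hence exponentially decaying. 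The second term $P(s)U_{inv}^{(0)}(s)$ decays by the same stable-realization argument used in \eqref{uinv(0)}, since the realization in \eqref{eq:almostpinverse} is stable by construction. Summing the two contributions delivers the bound \eqref{exp_decay_nmppseudo}.

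Establishing $\|u_{inv}(t)\|_\infty<\infty$ reduces to verifying that $T_p(s)P^{\dagger}(s)\in\mathcal{RH}_\infty$. Because $P(s)$ is non-minimum phase, $P^{\dagger}(s)=(P^{*}(s)P(s))^{-1}P^{*}(s)$ inherits RHP poles from $\det\bigl(P^{*}(s)P(s)\bigr)$ at every $z\in Z_R$. The deliberate augmentation $P_p(s)=z^{+}(s)P_p(s)$ forces $T_p(s)$ to carry $z^{+}(s)$ as a numerator factor, so each unstable mode of $P^{\dagger}$ cancels in the product $T_p(s)P^{\dagger}(s)$. Combined with $Y(s)\in\mathcal{RH}_\infty$ and with $\Xi_a^\Gamma(0,\bar{x}_0)$ finite by stability of \eqref{eq:almostpinverse}, this yields $\|u_{inv}(t)\|_\infty<\infty$ and completes the argument.

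The main obstacle I expect is the pole/zero cancellation: one must ensure that the RHP zeros contributed by $z^{+}(s)$ in $P_p$, and therefore in $T_p$, match the RHP poles of $P^{\dagger}(s)$ \emph{in multiplicity}, not only in location. A rigorous justification requires using the zero-direction structure $(u_z,y_z)$ from \eqref{eq:zerosP} together with invariance of RHP zeros under the non-singular transformations implicit in constructing $P_p$ from a column (or column-average) of $P$, so that every unstable factor of $\det(P^{*}P)$ is annihilated. A secondary subtlety is confirming that replacing $P$ by the scalar surrogate $P_p$ in the synthesis of $K_p$ does not disturb the internal-stability premise of Theorem~\ref{th:internal_stab} needed to conclude $S_p\in\mathcal{RH}_\infty$.
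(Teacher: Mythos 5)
Your proposal follows essentially the same route as the paper's proof: the same decomposition of $u_{inv}$, the collapse $P(s)P^{\dagger}(s)Y(s)=Y(s)$ from $Y(s)\in\im_{\mathbb{R}(s)}(P)$, commuting the scalar $T_p$ to obtain $E(s)=S_p(s)Y(s)-P(s)U_{inv}^{(0)}(s)$, internal stability of the surrogate loop for $S_p\in\mathcal{RH}_\infty$, and the interpolation constraint $T_p(z)=0$ at each $z\in Z_R$ (enforced by the $z^{+}(s)$ augmentation of $P_p$) to cancel the RHP poles of $P^{\dagger}$. The multiplicity caveat you raise is legitimate and is not explicitly resolved in the paper's proof either, which simply asserts the cancellation via the standard RHP-zero interpolation argument.
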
 
\begin{proof}
Consider the following decomposition as
\begin{align}
u_{inv}(t) = u_{inv}^{(f)}(t) + u_{inv}^{(0)}(t),
\end{align}
and note that $Y(s) \in \im_{\mathbb{R}(s)}(P) \implies U_{R1}\hat{U}_{R1}{Y}(s)={Y}(s) \implies P(s)P^\dagger(s)Y(s)=Y(s)$ and $P^{\dagger}(s)$, as in Corollary \ref{coro:min_phase_left_pseudo_inverses} solves \eqref{eq:algebraic equation} but with an unstable $u_{inv}(t)$ since \( Z_R \neq \emptyset \). To get a stable $u_{inv}(t)$, we can substitute $U_{inv}(s)=T_p(s)P^{\dagger}(s)Y(s)+U_{inv}^{(0)}(s)$ into \eqref{EinS} as
\begin{equation*}
 E(s)=Y(s)-P(s)T_p(s)P^\dagger(s)Y(s) -P(s)U_{inv}^{(0)}(s)   
\end{equation*}
and since $T_p(s)$ is scalar, using $P(s)P^\dagger(s)Y(s)=Y(s)$, and $S_p(s) \triangleq I-T_p(s)$, we get 
\begin{align}
 E(s)=Y(s)S_p(s)-P(s)U_{inv}^{(0)}(s).  
\end{align}
Since $K_p$ in \eqref{eqTp} is obtained by \eqref{eq:min_maxSV}, which gives \footnotesize $\begin{bmatrix}
        I & K_p\\
        -P_p & I
\end{bmatrix}^{-1} \in \mathcal{R}\mathcal{H}_\infty$\normalsize, $P(s)$ and $P_p(s)$ shares the same poles, we have $S_p(s) \in \mathcal{RH}_\infty$.
In addition, \eqref{eq:internal_stab_matrix} yields that If $P(s)$ has a RHP zero at $z$, then $P^\dagger(s)$ has a RHP-pole at $z$ and $PK(I+PK)^{-1}$, has a RHP-zero at $z$ \cite{skogestad2005multivariable}, which applies on $T_p(s)$. Thus, $T_p(z)=0$ yielding $T_p(s)P^{\dagger}(s)Y(s) \in \mathcal{RH}_\infty$
Then following the proof of Theorem \ref{th:nm_phase_inverses} yields
\(
\|e(t)\| \leq \alpha \|e(0)\| e^{-\beta t}, \quad \forall t>0.
\)
and $\Vert u_{inv}(t) \Vert_\infty < \infty$. This completes proof.
\end{proof} 
\begin{remark}
Although operators such as $S_i(s)K(s)$, $S_i(s)$, and $T_i(s)$ are commonly used to analyze standard closed-loop responses, here we \emph{repurpose} $S_i(s)K(s)$ (\text{in Theorem}~\ref{th:nm_phase_inverses}) or $T_p(s)P^\dagger(s)$ (\text{in Theorem}~\ref{th:nm_phase_pseudoinverses}) as a direct control action - rather than using $K(s)$ solely like a conventional loop, which departs from typical closed-loop treatments. Consequently, while alternative methods can also be used to design the $K(s)$ - and therefore $S_i(s)K(s)$ and $T_p(s)$ - the $\mathcal{H}_\infty$ framework provides valuable analytical properties. In particular, it yields an “optimal” approximate inverse within our setting.
\end{remark}
The next corollary provides another approximate inverse, derived from Theorem~\ref{th:algebraic_iff}, that accommodates singular systems. To achieve this, one may, for instance, introduce sufficient low-pass characteristics, or alternatively apply a scalar output redefinition $z^+/z_d^+$, where 
\begin{align*} 
    z^+_d(s)=\prod_{z \in Z_R\bigcap \mathbb{R}} (s + z)^{m_z} \prod_{z \in Z_R\bigcap (\mathbb{C}\setminus \mathbb{R})} (s + z)^{m_z^\ast} (s + {z}^\ast)^{m_z^\ast}
\end{align*}
\begin{corollary} \label{coro:modified Th1}
Consider \(\text{rank}_{\mathbb{R}(s)}(P) \le \min (n_u,n_y) \) and \( Z_R \neq \emptyset \) with $Y(s) \in \im_{\mathbb{R}(s)}(P)$. Then, the approximate inverse is 
\begin{equation} \label{eq:modified Th1 approx}
  \Xi_a^\Gamma(s) = \Xi_i^\Gamma(s)({z^+(s)}/{z_d^+(s)}).
\end{equation}
satisfying Problem~\ref{pr:problem definition}.b with $\varphi=0$
\end{corollary}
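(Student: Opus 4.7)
The plan is to reduce Corollary~\ref{coro:modified Th1} to an application of Theorem~\ref{th:algebraic_iff} on a phase-filtered surrogate output, and then to estimate the residual error introduced by the filter. The central observation is that the factor $\phi(s)\triangleq z^+(s)/z_d^+(s)$ is a scalar stable all-pass: its poles $\{-z,-z^*\}$ all sit in the open LHP, so $\phi\in\mathcal{RH}_\infty$; and since $z^+$ and $z_d^+$ are monic of identical degree, $\phi(\infty)=1$ and $|\phi(j\omega)|=1$ for all $\omega\in\mathbb{R}$.

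First, I would define the surrogate $\tilde Y(s)\triangleq \phi(s)Y(s)$ and verify that it satisfies both conditions of Theorem~\ref{th:algebraic_iff}. Because $\im_{\mathbb{R}(s)}(P)$ is an $\mathbb{R}(s)$-subspace and $\phi$ is a nonzero element of $\mathbb{R}(s)$, the hypothesis $Y\in\im_{\mathbb{R}(s)}(P)$ immediately yields $\tilde Y\in\im_{\mathbb{R}(s)}(P)$. Moreover, since $(s-z)^{m_z}$ (together with the conjugate factor when $z\notin\mathbb{R}$) divides $z^+(s)$ for every $z\in Z_R$, we have $\tilde Y(z)=0$ on all of $Z_R$. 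Both the image-inclusion and the RHP-interpolation requirements of Theorem~\ref{th:algebraic_iff} are therefore met.

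Second, applying Theorem~\ref{th:algebraic_iff} to $\tilde Y$ through the general right inverse \eqref{eq:leftinverse} (which is the one that accommodates the singular case $\text{rank}_{\mathbb{R}(s)}(P)<\min(n_u,n_y)$) produces
\begin{equation*}
U(s)=\Xi_i^\Gamma(s)\phi(s)Y(s)=\Xi_a^\Gamma(s)Y(s)\in\mathcal{RH}_\infty,
\end{equation*}
so that $\|u_{inv}(t)\|_\infty<\infty$, and simultaneously $P(s)U(s)=\tilde Y(s)$. Hence the error takes the form $E(s)=Y(s)-P(s)U(s)=Y(s)\bigl(1-\phi(s)\bigr)$. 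Since $1-\phi\in\mathcal{RH}_\infty$ is strictly proper (because $\phi(\infty)=1$) and the standing assumption of Section~\ref{sec:almostiff} delivers $Y\in\mathcal{RH}_\infty$ (itself strictly proper by \eqref{eq:ss0}), the product $Y(1-\phi)$ is a strictly proper stable rational function. Consequently $e(t)$ is a finite sum of decaying exponentials, giving $\|e(t)\|\le\alpha\|e(0)\|e^{-\beta t}$ for some $\alpha,\beta>0$, which is the bound \eqref{ESwithinevitabeerrors} with $\varphi=0$.

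The delicate step is the identity $P(s)\Xi_i^\Gamma(s)\tilde Y(s)=\tilde Y(s)$ in the singular case, which is precisely \eqref{eq:Y invaiance} restated for the surrogate. It requires (i)~that the polynomial null-space term $(I_{n_u}-\hat V_{R1}V_{R1})\bar\kappa$ in \eqref{eq:leftinverse} be annihilated by left multiplication by $\grave P$ and therefore contribute no dynamics to $P(s)U(s)$, and (ii)~that the image inclusion $\tilde Y\in\im_{\mathbb{R}(s)}(P)$ be what makes the orthogonal projection $U_{R1}\hat U_{R1}$ act as the identity on $\tilde Y$. Both facts are already internal to the proof of Theorem~\ref{th:algebraic_iff}, so my plan is to cite that reduction rather than redo the polynomial-basis argument, and the corollary then drops out.
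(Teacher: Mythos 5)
Your argument is correct and is exactly the route the paper intends: Corollary~\ref{coro:modified Th1} is stated without a written proof, as an immediate consequence of Theorem~\ref{th:algebraic_iff} via the scalar output redefinition $z^+/z_d^+$, and you have simply made that reduction explicit (the all-pass filter forces $\tilde Y(z)=0$ on $Z_R$, scalar multiplication preserves $\im_{\mathbb{R}(s)}(P)$, the null-space term is annihilated by $\grave P$, and the error $E=(1-z^+/z_d^+)Y$ is stable and strictly proper, hence decays exponentially, giving $\varphi=0$). The only cosmetic wrinkle is the paper's own conflation of $Y$ with $\grave Y=\Delta Y$ in the definition of $\Xi_i^\Gamma$, which you inherit but which affects nothing.
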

\begin{remark}
Theorems~\ref{th:nm_phase_inverses}, \ref{th:nm_phase_pseudoinverses}, and Corollary~\ref{coro:modified Th1} each provide a stable approximation when $Y(z) \neq 0$ for all $z \in Z_R$. Note also that the control inputs $u_{\text{inv}}(t)$ defined by \eqref{eq:almostinverse_time}, \eqref{eq:almostinverse_ns_time}, and \eqref{eq:modified Th1 approx} are not unique.
In Theorem~\ref{th:nm_phase_inverses}, the condition $Y(s) \in \im_{\mathbb{R}(s)}(P)$ always holds because $P(s)$ is either square or overactuated and has full column rank. However, Theorem~\ref{th:nm_phase_pseudoinverses} and Corollary~\ref{coro:modified Th1} allow for $Y(s)\notin \im_{\mathbb{R}(s)}(P)$, resulting in $\varphi \neq 0$ in \eqref{ESwithinevitabeerrors}.
\end{remark}
All results thus far assume a nominal system \( P(s) \), but modeling errors, numerical problems, and unknown forward system initial states can lead to instabilities. These implications, along with structures over \( \varphi \), will be discussed in the next section. 
\subsection{Robustness} \label{sec:practical_modif}
Stable inversion is typically studied under the assumption of complete model knowledge including initial states $x_0$ which is often not feasible. In this section, we further 
relax the constraints in Theorem~\ref{th:algebraic_iff}. Specifically, Theorem~\ref{th:feedback_unc} addresses the condition $Y(s)\notin \im_{\mathbb{R}(s)}(P)$ under uncertainties, while Corollaries \ref{coro:squareextunc}-\ref{coro:relaxationsUNC} explore 
the full range of possible relaxation scenarios. Now, the desired output can be rewritten as
\begin{align} \label{eq:algebraicequation_desired}
P(s)U(s)=Y_d(s) 
\end{align}
where bounded $Y_d(s)\in \im_{\mathbb{R}(s)}(P) \cap \mathcal{RH}_\infty$ denotes the \textit{desired} output. For the uncertainties over $P(s)$ in \eqref{eq:algebraicequation_desired}, let the perturbed plant, $P_\Pi(s)$, be a member of all possible plants
\begin{equation} \label{eq:phat}
P_\Pi(s) = \small \left(\begin{array}{c|c}
   A_\Pi & B_\Pi \\ \hline
   C_\Pi & D_\Pi
\end{array}\right) \normalsize \in  \Pi \triangleq \left\{ ({I}+W_1 \Delta_u {W}_2){P} \right\}
\end{equation}
where $W_1, W_2$ are TFMs that characterize the spatial and frequency structure of the uncertainty, $\Delta_u$ denotes any unknown unstructured function with 
$\Vert \Delta_u \Vert_\infty <1$ \cite[chapter 8.1]{zhou1998essentials}. Moreover, the perturbed (real) system's zeros are
\begin{align} \label{eq:zerosPPi}
\text{Z}_{P_\Pi} &\triangleq \{z \in \mathbb{C} : P_\Pi(z)u_z=0y_z\}\\
\text{Z}_{R_\Pi} &\triangleq  \{z \in \text{Z}_{P_\Pi} : \Re(z) > 0\} \subseteq \text{Z}_{P_\Pi}.  \label{eq:rhpzerosPpi}
\end{align}
Here ${Z}_{P_\Pi}$ might be different from $Z_P$ which means $P_\Pi$ in (\ref{eq:phat}) considers \textit{uncertain RHP zeros} for $P(s)$. Then, the perturbed output for a given input as
\begin{align}
    y_\Pi(t)=\Laplace^{-1}\{ P_\Pi(s)U(s) \} =\Laplace^{-1}\{ Y_\Pi(s) \}.
\end{align}

Note that the inner product for any vector-valued functions $F,G \in \mathcal{L}_2(j\mathbb{R}) \cap \mathbb{R}(s)^{n_y}$ is defined as:
\begin{align}
    \langle F,G \rangle \triangleq \frac{1}{2\pi}\int_{-\infty}^\infty \sum_{k=1}^{n_y} F_k^\ast(j\omega)G_k(j\omega)d\omega
\end{align}
Consider \(L\) given in \eqref{eq:Lind_columns_P}, as $ P(s) \in \mathcal{L}_2(j\mathbb{R})\cap \mathbb{R}(s)^{n_y \times n_u}$, $L \in \mathcal{L}_2(j\mathbb{R})$. Given that the columns in \(L\) are not necessarily orthogonal, we can rectify this by applying the Gram-Schmidt process to the columns of \(L\). This process can be expressed as:
\begin{align}
v_i = p_i - \sum_{j=1}^{i-1} \frac{\langle p_i, q_j \rangle}{ \langle q_j,q_j \rangle } q_j,  q_i = \frac{v_i}{\sqrt{\langle v_i,v_i \rangle}},   i = 1, \ldots, r.
\end{align}
Here, the \(v_i\) vectors are orthogonal, and the \(q_i\) vectors are orthonormal. 
Based on this, the orthonormal set is defined as ${Q} = [ q_{1} \ \dots \ q_{r} ]\in \mathcal{L}_2(j\mathbb{R}) \cap \mathbb{R}(s)^{n_y \times r}$ which spans the same subspace as \(L\) in \eqref{eq:Lind_columns_P}. With these conditions the transformation from \(L\) to \(Q\) always exists. 
\begin{definition} \label{def:projres}
Let \(\im_{\mathbb{R}(s)}(P)\) be spanned by an orthonormal basis $\{q_k\}_{k=1}^{r} \subset \mathcal{L}_2(j\mathbb{R})$. For any $F \in \mathcal{L}_2(j\mathbb{R})$, the projection onto $\im_{\mathbb{R}(s)}(P)$ is given by:
\begin{align} 
\label{eq:Profoveractout}
    \proj_{\im{(P)}}[ &F ] = \sum_{i=1}^r \langle F,q_i\rangle q_i,  
\end{align}
where \( F \) can be decomposed as:
\begin{align} \label{eq:projdecomp}
&F = \proj_{\im(P)}[F] + \res[F],\\
\text{with }&\langle \res[F], q_k \rangle = 0,  \forall k \in \{1, \dots, r\}. \label{eq:projdecomp2}
\end{align}
\end{definition}
Then, the overall feedback strategy combining two feedforward actions to deal with uncertainties is given by Fig. \ref{f:blockdiag}. Here, $u_{ff}(t)$ is feedforward control input, $u_{fb}(t)$ is feedback control input, and $u_c(t)$ is combined (effective) control input $u_c(t)$, $y_\Pi(t)$ denotes the real output under $u_c(t)$, and we have $y_\Delta(t) \triangleq y_\Pi(t)-y(t)$.

Based on the conditions, Corollary~\ref{coro:modified Th1} (Fig. \ref{f:blockdiag}(c)(i)) is valid for all system classes, including non-minimum phase, square, non-square, and singular systems, though it requires an output redefinition in the feedback path, $\bar{y}_\Delta(t)~=~({z^+}/{z_d^+})~\ast~y_\Delta(t)$.

The other approximate inversions are applicable to full-rank systems; for square/overactuated systems, the loop corresponds to Fig. \ref{f:blockdiag}(c)(ii), and for non-square (underactuated) systems, it corresponds to Fig. \ref{f:blockdiag}(c)(iii). Then, over the nominal system $P(s)$, the feedforward control signal $u_{ff}(t)$ is obtained by solving \eqref{eq:leftinverse}, \eqref{eq:almostinverse}, or \eqref{eq:almostpinverse}, subject to \eqref{eq:algebraicequation_desired}. Similarly, $u_{fb}(t)$ and $u_{ff}(t)$ are obtained by following the same procedures.
The next question, whether the proposed loop in Fig.\ref{f:blockdiag} can compensate the error caused by uncertainty, is revealed by the upcoming theorem.
\begin{figure}
\centering
\includegraphics[width=.9\columnwidth]{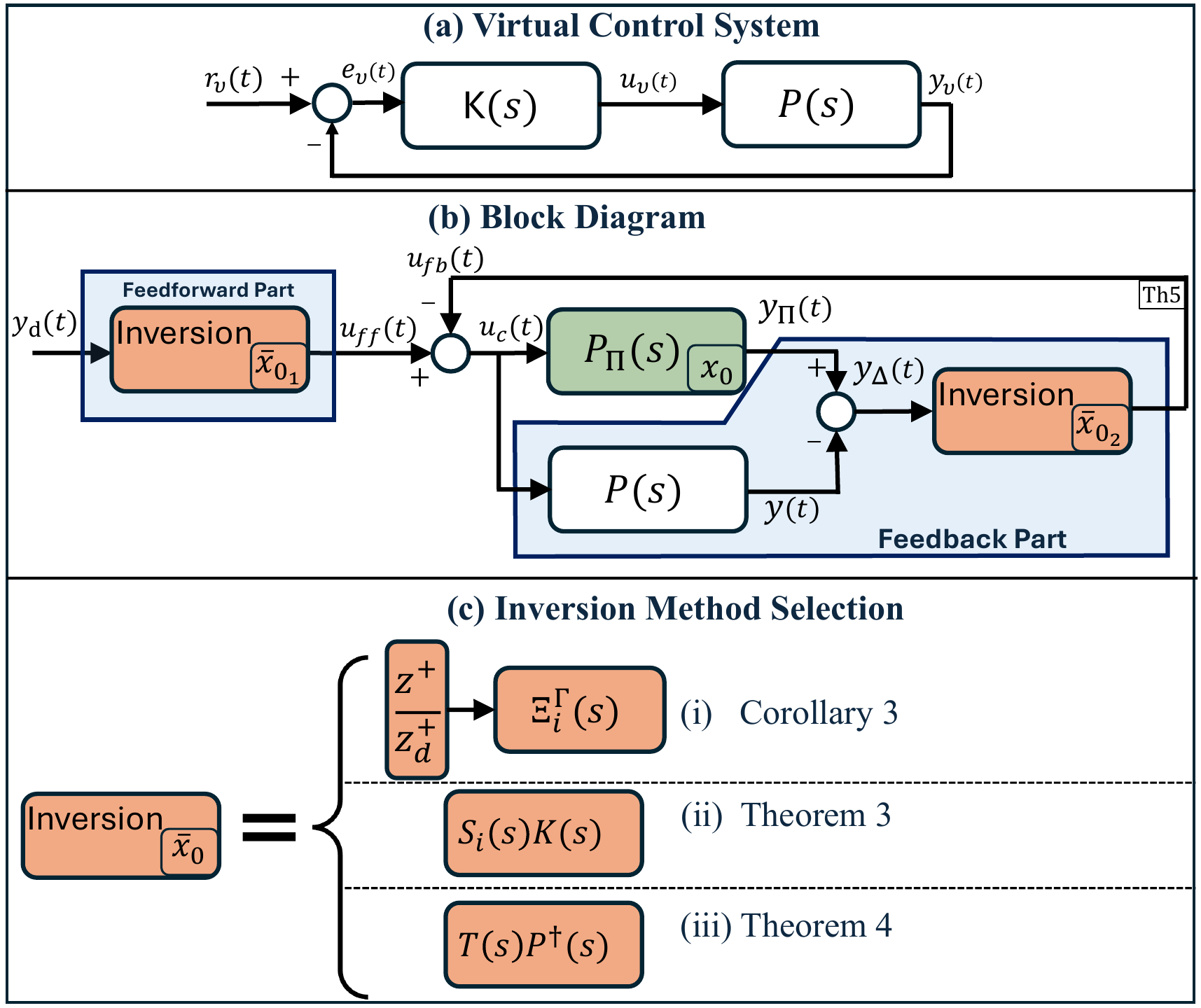}
\caption{Block diagram for closing the loop in stable inversion.}
\label{f:blockdiag}
\end{figure}
\begin{theorem} \label{th:feedback_unc}
Consider the scheme in Fig. \ref{f:blockdiag}.b. Let $ u_{ff}(t)=\Xi_a^\Gamma(y_d(t),\bar{x}_{0_1})$ and $u_{fb}(t)=\Xi_a^\Gamma(\bar{y}_\Delta(t), \bar{x}_{0_2})$ be designed over the $P(s)$ with $Z_R \ne \emptyset$. Suppose the actual system $P_\Pi$ (with $P_\Pi \ne P$ and unknown $x_0$) is as in \eqref{eq:phat}. Then, under $u_c(t) = u_{ff}(t) - u_{fb}(t)$ the tracking error satisfies
\begin{align} \label{eq:th6statement}
    \bigl( y_d(t)-{y}_\Pi(t) \bigr) \to \Laplace^{-1}\{ \normalfont 
 \text{res}[Y_\Pi]\} 
\end{align}
\end{theorem}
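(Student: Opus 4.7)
The plan is to recast the block diagram of Fig.~\ref{f:blockdiag}.b as a single rational loop equation in the Laplace domain and then exploit the orthogonal decomposition of Definition~\ref{def:projres}. Substituting $U_{ff}=\Xi_a^\Gamma Y_d+(\mathrm{IC})$, $U_{fb}=\Xi_a^\Gamma \bar Y_\Delta+(\mathrm{IC})$, $Y_\Pi=P_\Pi U_c+(\mathrm{IC})$, $Y=PU_c$, and $\bar Y_\Delta=\tfrac{z^+}{z_d^+}(Y_\Pi-Y)$ into $U_c=U_{ff}-U_{fb}$ eliminates the internal signals and yields $\bigl(I+\Xi_a^\Gamma\tfrac{z^+}{z_d^+}(P_\Pi-P)\bigr)U_c=\Xi_a^\Gamma Y_d+(\text{bounded IC terms})$. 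The claim~\eqref{eq:th6statement} then follows from (i) internal stability of this perturbed loop and (ii) identification of the asymptotic error with the unreachable component of $Y_\Pi$ singled out by Definition~\ref{def:projres}.

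For (i), since $\Xi_a^\Gamma\in\mathcal{RH}_\infty$ by Theorems~\ref{th:nm_phase_inverses}--\ref{th:nm_phase_pseudoinverses}, $P,P_\Pi\in\mathcal{RH}_\infty$ via Assumption~\ref{ass:minimalABC} together with $\|\Delta_u\|_\infty<1$, and the $\mathcal{H}_\infty$ synthesis in~\eqref{eq:min_maxSV} supplies a small-gain margin on $\Xi_a^\Gamma\tfrac{z^+}{z_d^+}(P_\Pi-P)$, Theorem~\ref{th:internal_stab} applied in the perturbed context guarantees $\bigl(I+\Xi_a^\Gamma\tfrac{z^+}{z_d^+}(P_\Pi-P)\bigr)^{-1}\in\mathcal{RH}_\infty$. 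All initial-condition responses from $x_0,\bar x_{0_1},\bar x_{0_2}$ then decay exponentially, exactly as in the proofs of Theorems~\ref{th:nm_phase_inverses} and~\ref{th:nm_phase_pseudoinverses}, so only the steady-state input--output map matters.

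For (ii), the central observation is that $\Xi_a^\Gamma$ is designed to invert $P$ on $\mathrm{Im}_{\mathbb{R}(s)}(P)$ (with the $z^+/z_d^+$ augmentation covering the singular case through Corollary~\ref{coro:modified Th1}), so the $\mathcal{H}_\infty$ synthesis forces $P\,\Xi_a^\Gamma$ to act as the identity on $\mathrm{Im}(P)$ asymptotically. Splitting $Y_\Pi=\mathrm{proj}_{\mathrm{Im}(P)}[Y_\Pi]+\mathrm{res}[Y_\Pi]$ by Definition~\ref{def:projres}, an IMC-style reduction of the loop equation (where $\Xi_a^\Gamma P_\Pi$ replaces the invertible term on the reachable subspace) shows that the feedback drives $\mathrm{proj}_{\mathrm{Im}(P)}[Y_\Pi]\to Y_d$, while $\mathrm{res}[Y_\Pi]$ is orthogonal to $\mathrm{Im}(P)$ by~\eqref{eq:projdecomp2} and therefore invisible to every $P$-based inversion. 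Combining these two asymptotic facts gives $y_d(t)-y_\Pi(t)\to\mathscr{L}^{-1}\{\mathrm{res}[Y_\Pi]\}(t)$ up to the sign convention built into the decomposition.

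The hardest point is (ii), because the physical loop sees $P_\Pi$ while the inversion was designed for $P$, and $Z_{R_\Pi}$ may differ from $Z_R$, so the nominal zero-interpolation conditions of Theorem~\ref{th:algebraic_iff} do not transport verbatim. Making the asymptotic identity $\mathrm{proj}_{\mathrm{Im}(P)}[Y_\Pi]\to Y_d$ rigorous will hinge on combining the robust-stability margin implicit in~\eqref{eq:min_maxSV} with the final-value argument used to drive $Y S_o\to 0$ in the proof of Theorem~\ref{th:nm_phase_inverses}, and on verifying that the $z^+/z_d^+$ filter does not reintroduce bias along the projection direction once $P_\Pi\neq P$.
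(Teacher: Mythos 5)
Your architecture matches the paper's proof: a single loop equation obtained by eliminating the internal signals, a Neumann-series/small-gain argument for boundedness, the orthogonal split of Definition~\ref{def:projres} with the observation that the residual is invisible to the $P$-based inverse, and a final-value argument on the reachable component. (Your loop equation is posed in $U_c$ while the paper's is posed in $Y_\Delta$, i.e.\ with loop gain $\tfrac{z^+}{z_d^+}\Delta_u P\,\Xi_i^\Gamma$ instead of $\Xi_a^\Gamma\tfrac{z^+}{z_d^+}\Delta_u P$; these are equivalent by the push-through identity, so that is not a real difference.) However, your step (i) attributes the contraction to the wrong source. The $\mathcal{H}_\infty$ synthesis \eqref{eq:min_maxSV} and Theorem~\ref{th:internal_stab} only certify the \emph{nominal} virtual loop $(P,K)$; they say nothing about $\bigl(I+\Xi_a^\Gamma\tfrac{z^+}{z_d^+}(P_\Pi-P)\bigr)^{-1}\in\mathcal{RH}_\infty$. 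The paper derives the needed bound $\bigl\Vert \tfrac{z^+}{z_d^+}\Delta_u P\,\Xi_i^\Gamma\bigr\Vert_\infty=\alpha_b<1$ from three structural facts: $P(I-V_{R1}\hat V_{R1})\bar\kappa=0$ so the null-space term drops out; $\max_\omega\bar\sigma\bigl(P\hat V_R\Lambda^{-1}\hat U_R\bigr)=1$, i.e.\ $P\,\Xi_i^\Gamma$ acts as a norm-one projection onto $\im_{\mathbb{R}(s)}(P)$; and $\Vert z^+/z_d^+\Vert_\infty=1$ (all-pass) together with $\Vert\Delta_u\Vert_\infty<1$. Without establishing that product norm is strictly below one, the Neumann series and hence the entire boundedness claim for $Y_\Delta$ is unsupported.

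The second gap is the one you yourself flag: the asymptotic identification in (ii) is asserted, not proved. The paper closes it constructively. Since $\langle\res[Y_\Delta],q_k\rangle=0$ gives $\Xi_i^\Gamma\res[Y_\Delta]=0$, one gets $PU_{fb}=\tfrac{z^+}{z_d^+}\proj_{\im(P)}[Y_\Delta]+PU^{(0)}_{fb}$, and after collecting the decaying initial-condition and feedforward terms into $D(s)$ with $\lim_{s\to0}sD(s)=0$, the error reduces \emph{exactly} to
\begin{align*}
Y_d-Y_\Pi \;\to\; -\Bigl(I-\tfrac{z^+}{z_d^+}\Bigr)\proj_{\im(P)}[Y_\Delta]\;-\;\res[Y_\Delta],
\end{align*}
whereupon $\lim_{s\to0}s\bigl(I-\tfrac{z^+}{z_d^+}\bigr)\proj_{\im(P)}[Y_\Delta]=0$ (because $z^+(0)/z_d^+(0)=\pm1$ and the factor is all-pass) annihilates the reachable part, leaving $\res[Y_\Delta]=\res[Y_\Pi]$. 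Your ``IMC-style reduction'' gestures at this but never produces the explicit error expression, and your worry about $Z_{R_\Pi}\neq Z_R$ is resolved precisely because the feedback path filters through $z^+/z_d^+$ built from the \emph{nominal} $Z_R$ only --- the perturbed zeros never need to be interpolated, they are absorbed into $\Delta_u$ and handled by the contraction. You should also note the algebraic-loop issue ($Y_\Delta$ depends instantaneously on $U_{fb}$ which depends on $Y_\Delta$), which the paper sidesteps with the truncated Laplace transform \eqref{eq:finitelaplace}; your elimination step tacitly assumes this is well-posed.
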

\begin{proof}
For brevity, let us denote $W_1 \Delta_u {W}_2$ simply by $\Delta_u$, and note that $u_c \triangleq u_c^{(f)}+u_c^{(0)} = u^{(f)}_{ff}+u^{(0)}_{ff}-u^{(f)}_{fb}-u^{(0)}_{fb}$. Although $\rank_{\mathbb{R}(s)}({P}_\Pi) \leq \rank_{\mathbb{R}(s)}({P})$, it may still occur that $\text{Im}_{\mathbb{R}(s)}(P_\Pi) \not \subseteq \text{Im}_{\mathbb{R}(s)}(P)$ implying $\text{res}[Y_\Pi] = \text{res}[Y_\Delta] \ne 0$. Without losing generality, we can re-define the bounds of integration as \cite[p.283-294]{debnath2016integral}
\begin{align}\label{eq:finitelaplace}
\Laplace \{ {y}_\Delta(t)\} = \int_0^\infty e^{-st}y_\Delta(t)dt = \int_0^{t-\epsilon} e^{-s\tau}y_\Delta(\tau)d\tau
\end{align}
where $\epsilon>0$ is chosen as small as to avoid the \textit{algebraic loop }issue.
Consider now the closed-loop configuration depicted in Fig. \ref{f:blockdiag}, as
\begin{align}
U_{fb}(s)-U_{fb}^{(0)}(s)&= \Xi_a^\Gamma(s){Y}_\Delta(s) = \Xi_i^\Gamma(s)\frac{z^+(s)}{z_d^+(s)}Y_\Delta(s)  \label{Ufb bary and y} \\
&=\frac{z^+(s)}{z_d^+(s)}\Xi_i^\Gamma(s)\bigr( \proj_{\im(P)}[Y_\Delta] + \res[Y_\Delta] \bigl) \nonumber
\end{align}
where $ \langle \res[Y_\Delta], q_k \rangle = 0$ implying $\Xi_i^\Gamma(s) \res[Y_\Delta] = 0$. Thus, the feedback only responds to the part of 
$Y_\Delta$ in $\im_{\mathbb{R}(s)}(P)$. Consequently,
\begin{align}
    PU_{fb}=\frac{z^+(s)}{z_d^+(s)} \proj_{\im(P)}[Y_\Delta]+PU^{(0)}_{fb}(s)
\end{align}
Next, by noting Remark~\ref{rem:no knowledge synthesis}, Theorem~\ref{th:nm_phase_inverses}-\ref{th:nm_phase_pseudoinverses}, and $P(s), \Delta_u \in \mathcal{RH}_\infty$, define 
\begin{align*}
D(s) \triangleq \bigl[ \Delta_uP(s) \bigl( U^{(0)}_{ff}(s) - U^{(0)}_{fb}(s)\bigr) + Y_\Pi^{(0)}(s) + U_{ff}^{(f)}(s) \bigr]
\end{align*}
where $Y_\Pi^{(0)}(s) = \Laplace^{-1}\{ C_\Pi e^{A_\Pi t}\,x_0 \}$ s.t. $ U^{(0)}_{ff}(s), U^{(f)}_{ff}(s), $ $U^{(0)}_{fb}(s), Y_\Pi^{(0)}(s)\in \mathcal{RH}_\infty$ are invariant under feedback, thus can be treated as a stable disturbance affecting the output. Then, the integral equation over $Y_\Delta$:
\begin{align*}
    &(I+\Delta_u)P(s)U_c(s) -P(s)U_c(s) + Y_\Pi^{(0)}(s) =\nonumber \\ &\Delta_uP(s)U_c(s)+ Y_\Pi^{(0)}(s) =Y_\Delta(s) = D(s)-\Delta_uP(s)U_{fb}^{(f)}(s).
\end{align*}
Using \eqref{Ufb bary and y} yields
\begin{align*}
    &D(s)-\frac{z^+(s)}{z_d^+(s)}\Delta_uP(s)\Xi_i^\Gamma(s)Y_\Delta(s) = Y_\Delta(s)\\
    &D(s) =\Bigl( I+ \frac{z^+(s)}{z_d^+(s)}\Delta_uP(s)\Xi_i^\Gamma(s) \Bigr)Y_\Delta(s)
\end{align*}
Since \small$P(I-V_{R1}\hat{V}_{R1})\bar{\kappa} = 0$,  $\max_\omega\bar{\sigma} \Bigl( P\hat{V}_R \begin{bmatrix}
    \Lambda^{-1}(s) & \textbf{0}\\
    \textbf{0} & \textbf{0}
\end{bmatrix}\hat{U}_R \Bigr)=1$\normalsize, $\biggl\Vert \frac{z^+(s)}{z_d^+(s)} \biggr\Vert_\infty =1$, and $\Vert \Delta_u\Vert_\infty <1$, it follows that:
\begin{align*}
\biggl\Vert \frac{z^+(s)}{z_d^+(s)}\Delta_uP(s)\Xi_i^\Gamma(s)\biggr\Vert_\infty= \alpha_b <1
\end{align*}
which ensures convergence of the following Neumann series.
\begin{align}
    \sum_{k=0}^\infty \biggl( \frac{z^+(s)}{z_d^+(s)}\Delta_uP(s)\Xi_i^\Gamma(s)\biggr)^k 
\end{align}
Thus, $\Bigl( I+ \frac{z^+(s)}{z_d^+(s)}\Delta_uP(s)\Xi_i^\Gamma(s)\Bigr)^{-1}$ exists and stable with
$\Vert Y_\Delta \Vert_\infty \leq {\Vert D(s)\Vert_\infty}/{\left( 1-\alpha_b\right) }$.
Then, the tracking error is
\begin{align*}
    Y_d(s)-Y_\Pi(s)&=Y_d(s)-Y_\Delta(s) - P(s)U_{c}(s). 
\end{align*}
With $\lim_{s \to 0}sD(s)=0$, Section~\ref{sec:almostiff} provides that $P(s)U_{ff}(s) \to Y_d(s)$ and thus
\begin{align*}
      Y_d(s)-Y_\Pi(s) &\to -Y_\Delta + P(s)U_{fb}(s) \\
   &= -\proj_{\im(P)}[Y_\Delta] - \res[Y_\Delta] + \frac{z^+(s)}{z_d^+(s)}\proj_{\im(P)}[Y_\Delta]
\end{align*}
Finally, 
$\mathrm{res}[Y_\Pi] = \mathrm{res}[Y_\Delta]$, and since \[\lim_{s \to 0} \biggl[ s \biggl(I - \frac{z^+(s)}{z_d^+(s)}\biggr) \proj_{\im(P)}[Y_\Delta] \biggr] = 0,\] standard final value arguments imply that as \(t \to \infty\), the components in \(\im_{\mathbb{R}(s)}(P)\) converges to zero. Hence,
\[
\lim_{t \to \infty} \bigl( y_d(t) - y_\Pi(t) \bigr) = \mathcal{L}^{-1}\bigl\{\mathrm{res}[Y_\Pi]\bigr\}.
\]
This shows that the tracking error converges to the inverse Laplace transform of the non-cancellable yet stable residual term, thus completing the proof.
\end{proof}
\begin{corollary} \label{coro:squareextunc}
Assume $P(s)$ is square, full rank implying $\res[Y_\Pi]=0$,  and $ Z_R \ne \emptyset$. Let $U_{ff}(s)=S_i(s)K(s)Y_d(s)$ and $U_{fb}(s)=S_i(s)K(s)\bar{Y}_\Delta(s)$. Then, $\bigl( y_d(t)-{y}_\Pi(t)\bigr) \to 0$.
\end{corollary}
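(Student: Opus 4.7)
The plan is to specialize the proof of Theorem~\ref{th:feedback_unc} to the square, full-rank setting, replacing the general approximate inverse $\Xi_i^\Gamma(s)\bigl(z^+(s)/z_d^+(s)\bigr)$ (valid for singular systems via Corollary~\ref{coro:modified Th1}) by the $\mathcal{H}_\infty$-based inverse $S_i(s)K(s)$ from Theorem~\ref{th:nm_phase_inverses}. The crucial observation is that when $P(s)$ is square and full rank, $\im_{\mathbb{R}(s)}(P)=\mathbb{R}(s)^{n_y}$, so the orthonormal basis $Q$ spans the entire output space: $\proj_{\im(P)}$ collapses to the identity and $\res[\,\cdot\,]\equiv 0$. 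This already delivers the hypothesis $\res[Y_\Pi]=0$, removing the residual obstruction that prevents vanishing tracking error in the general case. Moreover, the all-pass factor $z^+/z_d^+$ is unnecessary here because, by Theorem~\ref{th:nm_phase_inverses}, the operator $S_i K$ intrinsically produces a stable inverse despite $Z_R\neq\emptyset$.

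The execution proceeds as in the proof of Theorem~\ref{th:feedback_unc}, but with the following simplifications. First I would split $u_c=u_{ff}-u_{fb}$ and pass to Laplace transforms, obtaining $U_{fb}-U_{fb}^{(0)} = S_i K\,Y_\Delta$. Using the standard identity $P(I+KP)^{-1}K=(I+PK)^{-1}PK=T_o$, this yields $P(s)\bigl(U_{fb}-U_{fb}^{(0)}\bigr)=T_o(s)\,Y_\Delta(s)$, so no projection/residual bookkeeping remains. Collecting all stable initial-condition and nominal-feedforward contributions into a disturbance
\begin{align*}
D(s) \triangleq \Delta_u P(s)\bigl(U_{ff}^{(0)}-U_{fb}^{(0)}\bigr) + Y_\Pi^{(0)}(s) + U_{ff}^{(f)}(s),
\end{align*}
the same fixed-point derivation as in Theorem~\ref{th:feedback_unc} produces
\begin{align*}
\bigl(I+\Delta_u T_o(s)\bigr)\,Y_\Delta(s)=D(s).
\end{align*}
Internal stability (Theorem~\ref{th:internal_stab}) gives $T_o\in\mathcal{RH}_\infty$, and the mixed-sensitivity synthesis \eqref{eq:min_maxSV} together with $\|\Delta_u\|_\infty<1$ enforces $\|\Delta_u T_o\|_\infty=\alpha_b<1$ by the small-gain condition, so the Neumann series $\sum_k(-\Delta_u T_o)^k$ converges and $Y_\Delta\in\mathcal{RH}_\infty$ with $\|Y_\Delta\|_\infty\le\|D\|_\infty/(1-\alpha_b)$. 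Finally, using $P U_{ff}\to Y_d$ from Theorem~\ref{th:nm_phase_inverses} and $P U_{fb}\to T_o Y_\Delta$,
\begin{align*}
Y_d(s)-Y_\Pi(s)\ \to\ P U_{fb}(s)-Y_\Delta(s)\ =\ -S_o(s)\,Y_\Delta(s),
\end{align*}
where $S_o=I-T_o$. The $\epsilon_i\to 0$ step of Section~\ref{sec:almostiff} promotes the approximate integrators in $K$ to pure integrators, forcing $\bar\sigma(S_o(j\omega))\to 0$ on $\omega\in(0,\bar\omega]$; a final-value argument (note $\lim_{s\to 0} s\,S_o(s)Y_\Delta(s)=0$) then delivers $y_d(t)-y_\Pi(t)\to 0$.

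The main obstacle I expect is enforcing the small-gain bound $\|\Delta_u T_o\|_\infty<1$ uniformly over the admissible uncertainty class. Unlike the proof of Theorem~\ref{th:feedback_unc}, where $\|z^+/z_d^+\|_\infty=1$ and the structured inverse gave the clean estimate $\|z^+/z_d^+\cdot\Delta_u P\Xi_i^\Gamma\|_\infty<1$, here the bound rests entirely on how aggressively the weights $W_1,W_2,W_P$ shape $T_o$ relative to the uncertainty magnitude. Provided the synthesis \eqref{eq:min_maxSV} is carried out so that $\bar\sigma(T_o)$ remains below $1/\bar\sigma(W_1\Delta_u W_2)$ across frequency, the Neumann-series machinery goes through verbatim and the residual-free structure of the square full-rank case yields exact asymptotic tracking.
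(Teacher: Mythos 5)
Your proposal is correct and follows essentially the same route the paper intends: the corollary is the square, full-rank specialization of Theorem~\ref{th:feedback_unc}, where $\im_{\mathbb{R}(s)}(P)=\mathbb{R}(s)^{n_y}$ forces $\res[\cdot]\equiv 0$, the push-through identity $P S_i K = T_o$ replaces the projection/all-pass bookkeeping, and the tracking error reduces to $-S_o Y_\Delta$, which vanishes by the integrator/final-value argument. Your caveat that the small-gain bound $\Vert \Delta_u T_o\Vert_\infty<1$ must be enforced by the synthesis is well placed --- it is the exact analogue of the $\max_\omega\bar{\sigma}(\cdot)=1$ claim the paper invokes in the proof of Theorem~\ref{th:feedback_unc}, so you are merely making explicit a condition the paper leaves implicit.
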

\begin{corollary} \label{coro:relaxationsUNC}
For the depicted block diagram in Fig. \ref{f:blockdiag} with following conditions, \eqref{eq:th6statement} can also be re-written:
\begin{align}
\text{I. } \im_{\mathbb{R}(s)}&(P_\Pi) \subseteq \im_{\mathbb{R}(s)}(P) \text{ and } Z_{R} = \emptyset\  \nonumber\\
&\implies  \bigl( y_d(t)-y_\Pi(t) \bigr) = 0  \nonumber\\
\text{II. }\im_{\mathbb{R}(s)}&(P_\Pi) \subseteq \im_{\mathbb{R}(s)}(P) \text{ and } Z_{R} \ne \emptyset \nonumber\\ &\implies  \bigl( y_d(t)-y_\Pi(t) \bigr) \to 0 \nonumber \\
\text{III. }\im_{\mathbb{R}(s)}&(P_\Pi) \not\subseteq \im_{\mathbb{R}(s)}(P) \text{ and } Z_{R} = \emptyset \nonumber\\ &\implies  \bigl( y_d(t)-y_\Pi(t) \bigr) = \Laplace^{-1}\{ \normalfont 
 \text{res}[Y_\Pi]\}  \nonumber
\end{align}
\end{corollary}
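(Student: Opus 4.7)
My plan is to treat Corollary~\ref{coro:relaxationsUNC} as a direct case analysis of the error identity already derived inside the proof of Theorem~\ref{th:feedback_unc}, namely
\begin{align*}
Y_d(s) - Y_\Pi(s) \,\to\, -\,\res[Y_\Delta] \;+\; \Bigl(\tfrac{z^+(s)}{z_d^+(s)} - I\Bigr)\proj_{\im{(P)}}[Y_\Delta].
\end{align*}
The three scenarios individually switch off one of the two error-generating mechanisms: the residual $\res[Y_\Pi]$ vanishes exactly when $\im_{\mathbb{R}(s)}(P_\Pi) \subseteq \im_{\mathbb{R}(s)}(P)$ (so that $Y_\Pi \in \im_{\mathbb{R}(s)}(P)$), while the factor $z^+/z_d^+$ collapses to the identity exactly when $Z_R = \emptyset$. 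Rather than redo the contraction and Neumann-series estimates of Theorem~\ref{th:feedback_unc}, I would specialize the above expression to each case and track how ``$\to$'' tightens into equality.

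For Case~I, I would first use $Z_R = \emptyset$ to replace the $\mathcal{H}_\infty$ approximate inverse with the exact inverse of Theorem~\ref{th:algebraic_iff} (via Corollary~\ref{coro:min_phase_inverses} or~\ref{coro:min_phase_left_pseudo_inverses}, depending on whether $P$ is square or overactuated), and then apply $\im_{\mathbb{R}(s)}(P_\Pi) \subseteq \im_{\mathbb{R}(s)}(P)$ to annihilate $\res[Y_\Delta]$. Both terms then vanish identically, giving $y_d(t) - y_\Pi(t) = 0$. Case~II uses the same residual argument, but the nonempty $Z_R$ forces retention of the $\mathcal{H}_\infty$ approximation, so only the convergence claim $(y_d - y_\Pi)\to 0$ inherited from Theorem~\ref{th:feedback_unc} is available. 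Case~III reverses the roles: $Z_R = \emptyset$ makes $z^+ \equiv z_d^+$ so the bracketed projection term vanishes identically and the exact inverse is again in force; however $\im_{\mathbb{R}(s)}(P_\Pi) \not\subseteq \im_{\mathbb{R}(s)}(P)$ leaves $\res[Y_\Pi]\ne 0$ as the sole, now exactly realized, contribution to the tracking error.

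The step I expect to require the most care is the upgrade from ``$\to$'' to ``$=$'' in Cases~I and~III. Concretely, I need to verify that when $Z_R = \emptyset$, (i) the Youla-based $\mathcal{H}_\infty$ design of Section~\ref{sec:almostiff} degenerates so that $S_iK$ (respectively $T_p P^\dagger$) coincides with the algebraic exact inverse of Theorem~\ref{th:algebraic_iff}, giving $P(s)U_{ff}(s) = Y_d(s)$ identically, and (ii) the operator $\bigl(I + (z^+/z_d^+)\Delta_u P\Xi_i^\Gamma\bigr)^{-1}$ appearing in the proof of Theorem~\ref{th:feedback_unc} simplifies because the projection/residual splitting of Definition~\ref{def:projres} makes the feedback act as a perfect cancellation on $\proj_{\im(P)}[Y_\Delta]$. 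Once these two observations are in hand, each case follows by direct substitution into the reduced error expression together with a standard final-value argument, with no new contraction estimates needed.
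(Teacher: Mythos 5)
Your proposal is correct and follows essentially the same route as the paper, which states this corollary without a separate proof precisely because it is the case-by-case specialization of the error identity $Y_d - Y_\Pi \to -\res[Y_\Delta] + \bigl(\tfrac{z^+}{z_d^+} - I\bigr)\proj_{\im(P)}[Y_\Delta]$ derived inside the proof of Theorem~\ref{th:feedback_unc}, with $\res[Y_\Pi]$ killed by the image-inclusion hypothesis and $z^+/z_d^+$ collapsing to $1$ (and the exact inverse of Theorem~\ref{th:algebraic_iff} becoming available) when $Z_R=\emptyset$. Your flagged concern about upgrading ``$\to$'' to ``$=$'' in Cases~I and~III is legitimate and is in fact handled more carefully in your write-up than in the paper, which glosses over the residual initial-condition transients.
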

Here, the term \( \Laplace^{-1}\{\text{res}[.]\} \) corresponds directly to \(\varphi\) as described in \eqref{ESwithinevitabeerrors}, representing the contribution of inevitable errors. In this paper, time delays are not treated as part of the uncertainty; however, the framework can be extended to handle stochastic and delayed systems by leveraging mean-square exponential stability techniques \cite{9410357}.
On the other hand, for implementation, the compactness of the algebraic structures allows for facilitating straightforward solutions. However, when utilizing $\mathcal{H}_\infty$-based approximate solutions, we encounter complex, high but finite-order structures where using the balanced model reduction is a solution.
\section{Numerical Examples}\label{sec:examples}
In this section, we present some numerical examples to illustrate the effectiveness of the proposed approach. 
\begin{example} \label{sec:ex2}
Consider, as in \cite{saeki2023model}, the following $2 \times 2$, full rank, and minimal system with $Z_P=\{-10,-0.86,1\}$, $Z_R={1}$.
\begin{align} \label{ex:2}
    P(s)=\begin{bmatrix}
        \frac{(1-s)}{(s+1)^2} & \frac{0.3}{s+0.5} \\
        \frac{-(1-s)}{(s+1)^2(s+2)} & \frac{2}{s+3}
    \end{bmatrix}
\end{align}
In this case, $\rank_{\mathbb{R}(s)}(P) = 2 = n_u = n_y$, which implies that $\res[Y] = 0$. For the desired loop shape $\text{diag}\left[\frac{5}{s}, \frac{5}{s}\right]$, we solve \eqref{eq:min_maxSV} with $\gamma = 3.6$. The approximate inverse $\Xi_a^\Gamma(s)=S_i(s)K(s)$ is plotted in Fig.~\ref{f:ex2}\,(right), in line with Theorem~\ref{th:nm_phase_inverses}.  
The control signal $u_{\text{inv}}(t)$ appears in Fig.~\ref{f:ex2}\,(middle), and the output response in Fig.~\ref{f:ex2}\,(left).  
Relative to the inner–outer-factorization benchmark \cite[Fig.~8, black curves]{saeki2023model}, our feed-forward design achieves markedly better tracking.
\begin{figure}
\centering
\includegraphics[width=0.9\columnwidth,height=0.095\textheight]{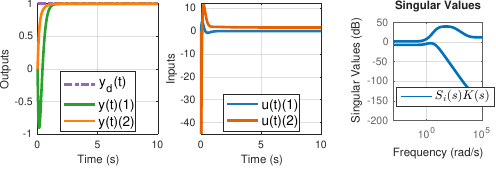}
\caption{Output tracking (left), control input (middle), and singular-value plot of the approximate inverse $S_i(s)K(s)$ (right).}
\label{f:ex2}
\end{figure}
\end{example}
Now, consider that the real system $P_\Pi(s)$ differs from $P(s)$:
\begin{align} \label{ex:2}
    P_\Pi(s) = \begin{bmatrix}
        \frac{(0.2-s)}{s^2 + 20s + 100} & \frac{0.3}{s + 0.1} \\
       \frac{s^2 + 1.8s - 0.4}{s^2 + 8s + 16}  & \frac{2}{s + 2.1}
    \end{bmatrix},
\end{align}
where $\text{Z}_{P_\Pi} = \{-20.4, -6.4, -2.7, -1.2, 0.2\} \neq \text{Z}_P$ and $\text{Z}_{R_\Pi} = \{0.2\} \neq \text{Z}_R$. Moreover, $P_\Pi(s)$ has poles at $s_{1,2,3,4,5,6} = -10, -10, -4, -4, -0.1, -2.1$, which are different from the poles of the nominal system $P(s)$, $s_{1,2,3,4} = -1, -1, -0.5, -3$. According to Theorem~\ref{th:feedback_unc}, or more specifically, Corollary~\ref{coro:relaxationsUNC}.II, to handle the uncertainty, we employ Fig. \ref{f:blockdiag}.(b) with $S_i(s)K(s)$ by letting $\epsilon_i\to0$ for both inversion blocks.
Two simulations were performed. In both cases, a unit step disturbance is applied to the output at \(t=125\)~s. In the first, zero initial states are used for both the inverse and forward systems, while in the second, random initial states are assigned to illustrate the effectiveness of the proposed approach: \footnotesize $x_0 = [3.1, 4.1, -3.7,4.1,1.3,-4], \bar{x}_{0_1}=[-0.2,0.6,1.9,1.9,-0.5,1.9,1.8,0.4,1.4,-0.6,0.3,1.7,1.4,1.8], \bar{x}_{0_2} =[0.3,-0.9,0.7,0.9,0.4,0.5,0.5,0.5,-0.2,0.3,-0.7,0.4,-1,-0.4,-0.9].$ \normalsize The reference tracking performance shown by Fig. \ref{f:ex4_outs} with a bounded input validating the theory. 
\begin{figure}
\centering
\includegraphics[width=0.8\columnwidth]{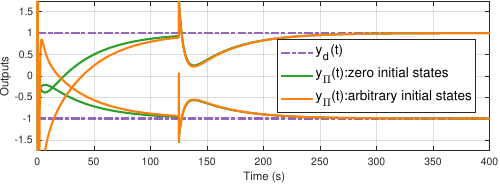}
\caption{The output tracking of Example 1 under uncertainty with zero and arbitrary initial states.}
\label{f:ex4_outs}
\end{figure}
\begin{example} \label{sec:ex3}
Now, consider the following vectorial system \small
\begin{align} \label{ex:3}
    P(s)=\begin{bmatrix}
        \frac{s^2-5s-50}{s^2+3s+2} &
        \frac{s-10}{s^2+3s+2} 
    \end{bmatrix}^T
\end{align}\normalsize
where it has an invariant zero at $s=10$, thus it is a non-minimum phase and underactuated system. Define $Y_d$ as
\begin{align}
    Y_d(s)=\begin{bmatrix}
    \frac{2}{s^2+1}+\frac{8}{s} & \frac{1}{s}
    \end{bmatrix}^T \notin \im_{\mathbb{R}(s)}(P) \text{  s.t. } \res[Y_d] \ne 0.
\end{align}
Then, $Y(s)=P(s)\Xi_a^\Gamma(s)Y_d(s)$ such that $\res[Y] \ne 0 \impliedby Y_d(s) \notin  \im_{\mathbb{R}(s)}(P) \cap \mathcal{RH}_\infty$. Based on Theorem~\ref{th:nm_phase_pseudoinverses}, define $P_p(s)=\frac{s^2-5s-50}{s^2+3s+2}$ which also has an invariant zero at $s=10$. Solving \eqref{eq:min_maxSV} yields a stable $T_p(s)$ with $\gamma = 2.32$. Also, based on \eqref{ex:3}, we have
\begin{align}
    P^\dagger(s)=\begin{bmatrix}\frac{s^3+8s^2+17s+10}{s^3-74s-260} & \frac{s^2+3s+2}{s^3-74s-260} \end{bmatrix}
\end{align}
yielding $T_p(s)P^\dagger(s) \in \mathcal{RH}_\infty$. The reference tracking performance of the form in Theorem \ref{th:nm_phase_pseudoinverses}, the design of $T_p(s)$, and the boundedness of $u(t)$ are shown by Fig. \ref{f:ex3_outs}. As it can be seen from Fig. \ref{f:ex3_outs}, the errors are fully in harmony with Definition~\ref{def:projres}.
\begin{figure}
\centering
\includegraphics[width=0.9\columnwidth,height=0.1\textheight]{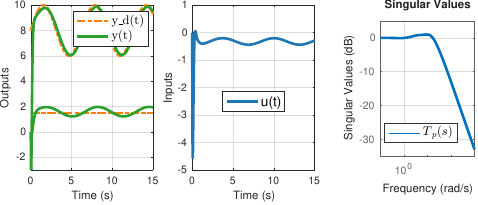}
\caption{Output tracking (left), control input (middle), and singular value plot of the approximate inverse \(T_p(s)\) (right).}
\label{f:ex3_outs}
\end{figure}
\end{example}
\section{Conclusion} \label{sec:conc}
We presented a unified algebraic framework for stable inversion, covering non-minimum-phase, nonsquare, and singular MIMO systems.  Necessary and sufficient conditions were established, and constructive inverses were shown with exponential error decay without preview. Uncertainty is handled by an orthogonal projection that isolates the reachable portion of the (desired) output, with the residual captured by the irreducible term~\(\varphi\).  A feed-forward/feedback loop then stabilizes the system under uncertainties.  Future work will develop data-driven techniques to identify the system’s reachable output subspace and adapt the inversion scheme accordingly.

\ifCLASSOPTIONcaptionsoff
  \newpage
\fi


\bibliographystyle{IEEEtran}
\bibliography{stab_inv}
\end{document}